\newtheorem{theorem}{Theorem}[section]
\newtheorem{lem}[theorem]{Lemma}
\newtheorem{cor}[theorem]{Corollary}
\newtheorem{pro}[theorem]{Proposition}
\theoremstyle{remark}
\newtheorem{rem}[theorem]{Remark}
\newtheorem{ex}[theorem]{Example}
\newtheorem{de}[theorem]{Definition}
\newcommand{\un}{\ensuremath{\underline}}
\def\D{\mathcal{D}}
\def\F{\mathcal{F}}
\def\Ass{\mathrm{Ass}}
\def\Ann{\mathrm{Ann}}
\def\Spec{\mathrm{Spec}}
\def\Supp{\mathrm{Supp}}
\def\Assh{\mathrm{Assh}}
\def\fm{\frak{m}}
\def\fn{\frak{n}}
\def\fp{\frak{p}}
\def\fq{\frak{q}}
\def\I{I_{\F, M}}
\begin{document}

\title{On a new invariant of finitely generated  modules\\ over local rings}        
\author{Nguyen Tu Cuong\footnote{Email: ntcuong@math.ac.vn}\ , Doan Trung Cuong\footnote{Email: dtcuong@math.ac.vn}\ \  and Hoang Le Truong\footnote{Email: hltruong@math.ac.vn}\\
Institute of Mathematics\\
18 Hoang Quoc Viet Road, 10307 Hanoi, Vietnam}     
\date{ }
\maketitle

\begin{abstract}
Let $M$ be a finitely generated module on a local ring $R$ and $\F: M_0\subset M_1\subset\ldots\subset M_t=M$ a filtration  of submodules of $M$ such that $ d_o<d_1< \ldots <d_t=d$, where $d_i=\dim M_i$.  This paper is concerned with a non-negative integer $p_\mathcal F(M)$  which is defined  as  the least degree of all polynomials in $n_1,\ldots , n_d$  bounding above the function
$$\ell(M/(x_1^{n_1}, \ldots, x_d^{n_d})M)-\sum_{i=0}^tn_1\ldots n_{d_i}e(x_1,\ldots, x_{d_i};M_i).$$
We prove that $p_\mathcal F(M)$
is independent of the choices of good systems of parameters $\underline x=x_1, \ldots, x_d$.  When $\F$ is the dimension filtration of $M$ we also present some relations between $p_\F(M)$ and the polynomial type of each $M_i/M_{i-1}$ and the dimension of the non-sequentially Cohen-Macaulay locus of $M$.\\
{\it Key words:} multiplicity, dimension filtration, filtration satisfies the dimension condition, good system of parameters.\\
{\it AMS Classification:} 13H15, 13H10, 13C15.

\end{abstract}

\section{Introduction}
Let $(R, \mathfrak m)$ be a commutative Noetherian local ring and $M$ a finitely generated $R$-module of dimension $d$. We consider a finite filtration  $\F: M_0\subset M_1\subset\ldots\subset M_t=M$ of submodules of $M$ such that $\dim M_0<\dim M_1<\ldots < \dim M_t=\dim M$. Such a filtration is said to satisfy the dimension condition. Let $\un x=x_1, \ldots, x_d$ be a system of parameters of $M$. Then $\underline x$ is called  a good system of parameters with respect to $\mathcal F$ if $M_i\cap (x_{d_i+1},\ldots, x_d)M=0$ for $i=0, 1, \ldots, t-1,$ where $d_i=\dim M_i$. Set 
$$\I(\underline x)=\ell(M/\underline xM)-\sum_{i=0}^te(x_1,\ldots, x_{d_i};M_i),$$
where $e(x_1,\ldots, x_{d_i};M_i)$ is the Serre multiplicity of $M_i$ with respect to $x_1,\ldots, x_{d_i}$. Denote $\underline x(\underline n)=x_1^{n_1},\ldots,x_d^{n_d}$ for any $d$-tuple of positive integers $n_1,\ldots,n_d$. It is shown in \cite{CC1} that $\I(\underline  x)$ is non-negative and $\I(\underline  x(\underline  n))$ is non-decreasing as a function in $n_1,\ldots, n_d$. In fact, $\I(\underline x(\underline  n))$ is not a polynomial in $n_1, \ldots, n_d$ in general. However, it can be seen easily that this function is bounded above by a polynomial. In this paper we study the least degree of the polynomials bounding above $\I(\underline  x(\underline  n))$ and show that this degree is independent of the choices of good systems of parameters with respect to $\mathcal F$. This is the content of the following theorem.

\begin{theorem}\label{A} Let $\F: M_0\subset M_1\subset\ldots\subset M_t=M$ be a filtration of submodules of $M$ satisfying the dimension condition and $\un x=x_1, \ldots, x_d$ a good system of parameters with respect to $\mathcal F$. Then the least degree of all polynomials in $n_1,\ldots,n_d$ bounding above the function $\I(\underline x(\underline  n))$ is independent of the choice of  $\underline x$. 
\end{theorem}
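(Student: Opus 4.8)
The plan is to reduce the problem to one variable, then induct on the length $t$ of the filtration, the base case $t=0$ being the independence of the ordinary polynomial type of a module.

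\emph{Step 1: reduction to one variable.} By \cite{CC1} the function $I_{\F,M}(\underline x(\underline n))$ is non-decreasing in $\underline n$. Hence it is bounded above by a polynomial of degree $q$ in $n_1,\dots ,n_d$ exactly when the diagonal function $\phi_{\underline x}(n)=I_{\F,M}(\underline x(n,\dots ,n))$ is bounded above by a polynomial of degree $q$ in $n$: if $\phi_{\underline x}(n)\le Q(n)$ then $I_{\F,M}(\underline x(\underline n))\le I_{\F,M}(\underline x(m,\dots ,m))\le Q(m)$ with $m=\max_i n_i\le n_1+\dots +n_d$, and the converse is immediate by restriction. So it suffices to show that the least degree of a polynomial bounding $\phi_{\underline x}$ is independent of $\underline x$.

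\emph{Step 2: a peeling identity.} For a good system of parameters one has $x_jM_i\subseteq M_i\cap (x_{d_i+1},\dots ,x_d)M=0$ whenever $j>d_i$; in particular $\underline x^-:=x_1,\dots ,x_{d_{t-1}}$ is a system of parameters of $M_{t-1}$, and the goodness inequalities for $\underline x$ restrict to show that $\underline x^-$ is a good system of parameters of $M_{t-1}$ with respect to the induced filtration $\F^-\colon M_0\subset \dots \subset M_{t-1}$. Put $N:=M/M_{t-1}$, so $\dim N=d$ and $\underline x$ is a system of parameters of $N$. Applying $-\otimes_RR/\underline x(\underline n)R$ to $0\to M_{t-1}\to M\to N\to 0$, using $\underline x(\underline n)M_{t-1}=(x_1^{n_1},\dots ,x_{d_{t-1}}^{n_{d_{t-1}}})M_{t-1}$ and $e(\underline x;M)=e(\underline x;N)$ (since $\dim M_{t-1}<d$), and splitting off the top term $n_1\cdots n_d\,e(\underline x;M)$ of the multiplicity sum, a length count gives
$$I_{\F,M}(\underline x(\underline n))=\bigl[\,\ell(N/\underline x(\underline n)N)-n_1\cdots n_d\,e(\underline x;N)\,\bigr]+I_{\F^-,M_{t-1}}(\underline x^-(\underline n^-))-\mathcal T_{\underline x}(\underline n),$$
where $\underline n^-=(n_1,\dots ,n_{d_{t-1}})$ and $\mathcal T_{\underline x}(\underline n)=\ell\bigl((\underline x(\underline n)M\cap M_{t-1})/(x_1^{n_1},\dots ,x_{d_{t-1}}^{n_{d_{t-1}}})M_{t-1}\bigr)\ge 0$. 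The first bracket is exactly the polynomial-type function of $N$ for the system of parameters $\underline x$.

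\emph{Step 3: induction and the upper bound.} By induction on $t$ the least bounding degree of $I_{\F^-,M_{t-1}}(\underline x^-(\underline n^-))$ is an invariant $p_{\F^-}(M_{t-1})$ of the pair $(M_{t-1},\F^-)$; and the base case $t=0$, applied to the $d$-dimensional module $N$, gives that the first bracket has least bounding degree the polynomial type $p(N)$, independent of $\underline x$. Moreover $0\le \mathcal T_{\underline x}(\underline n)\le \ell(M_{t-1}/(x_1^{n_1},\dots ,x_{d_{t-1}}^{n_{d_{t-1}}})M_{t-1})$, so $\mathcal T_{\underline x}$ has degree at most $d_{t-1}$, and since it is also bounded by $\ell(\operatorname{Tor}^R_1(N,R/\underline x(\underline n)R))$, whose degree is at most $p(N)$, the peeling identity already yields
$$\deg \phi_{\underline x}\ \le\ \max\{\,p(M/M_{t-1}),\ p_{\F^-}(M_{t-1})\,\},$$
a bound not involving $\underline x$. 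If this maximum exceeds $d_{t-1}$ then the dominant summand of the identity has degree strictly larger than that of $\mathcal T_{\underline x}$ and so is not cancelled, whence $\deg \phi_{\underline x}$ equals the maximum; in particular it is then independent of $\underline x$.

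\emph{Step 4: the remaining case, and the main obstacle.} What is left is the base case $t=0$ — the independence of the polynomial type of a module, the special case $\F\colon M$ of the theorem — together with the case $\max\{p(M/M_{t-1}),p_{\F^-}(M_{t-1})\}\le d_{t-1}$, in which $\mathcal T_{\underline x}$ may have the same degree as the rest of the identity and could, a priori, lower $\deg \phi_{\underline x}$ in an $\underline x$-dependent way. Both come down to comparing two good systems of parameters $\underline x,\underline y$, and I would do this along the lines of the polynomial-type case: a change of coordinates on $\underline x$ that is block upper triangular with respect to the blocks $\{d_{i-1}+1,\dots ,d_i\}$ of the filtration keeps the system good and preserves every multiplicity $e(x_1,\dots ,x_{d_i};M_i)$, so one may assume $\underline x$ and $\underline y$ are general; and for general good systems of parameters one compares $\phi_{\underline x}$ with $\phi_{\underline y}$ by sandwiching the ideals $(x_1^{n},\dots ,x_d^{n})$ and $(y_1^{n},\dots ,y_d^{n})$ between powers of one another and using a \emph{uniform-in-$n$} Artin--Rees estimate to control the resulting length discrepancies, and in particular the submodule $\underline x(\underline n)M\cap M_{t-1}$ (equivalently $\mathcal T_{\underline x}$). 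Making the Artin--Rees number uniform and keeping this comparison lossless at the top degree is exactly the delicate point, and the part I expect to be the main obstacle.
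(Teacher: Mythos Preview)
Your proposal has a genuine gap, and you name it yourself in Step~4. The peeling identity in Step~2 is correct, and Steps~1--3 do yield the upper bound $\deg\phi_{\underline x}\le\max\{p(M/M_{t-1}),p_{\F^-}(M_{t-1})\}$. But turning this into an equality requires controlling the subtraction of $\mathcal T_{\underline x}$, and this you do not do. Two specific problems. First, the assertion that $\ell\bigl(\operatorname{Tor}_1^R(N,R/\underline x(\underline n)R)\bigr)$ has degree at most $p(N)$ is unsupported; partial Euler characteristics of the Koszul complex on $\underline x(\underline n)$ are not in general bounded in degree by the polynomial type, and you give no argument. Second, even granting both degree bounds on $\mathcal T_{\underline x}$, the case $\max\{p(N),p_{\F^-}(M_{t-1})\}\le d_{t-1}$ is left entirely open: the ``uniform-in-$n$ Artin--Rees'' idea is only a hope, not a proof. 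You would need to show that the cancellation produced by $\mathcal T_{\underline x}$ against the other two summands has the \emph{same} top-degree effect for every good $\underline x$, and nothing in your setup forces $(\underline x(\underline n)M\cap M_{t-1})$ to behave uniformly across good systems. Since this remaining case includes, for example, all sequentially generalized Cohen--Macaulay modules with $d_{t-1}\ge 1$, it is not a corner case.

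The paper's proof bypasses the decomposition entirely and compares two good systems $\underline x$, $\underline y$ directly. It first constructs (Lemma~\ref{noisop}) a chain of $2d+1$ good systems linking suitable powers $x_1^{r_1},\dots,x_d^{r_d}$ to $y_1^{s},\dots,y_d^{s}$, each consecutive pair differing in a single coordinate and having nested parameter ideals modulo $\Ann M$. A monotonicity lemma (Lemma~\ref{inequal}, proved by induction on $d=\dim M$, not on the length $t$ of the filtration) shows $I_{\F,M}$ decreases along such a one-coordinate step, and a crude bound $I_{\F,M}(\underline x(\underline n))\le c\,n_1\cdots n_d\,I_{\F,M}(\underline x)$ (Lemma~\ref{poly}) then lets one compose the $2d$ steps to obtain $I_{\F,M}(y_1^{ts},\dots,y_d^{ts})\le c^{2d}\,I_{\F,M}(x_1^{tr_1},\dots,x_d^{tr_d})$ for all $t$, whence $p_{\F,M}(\underline y)\le p_{\F,M}(\underline x)$. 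No splitting of $I_{\F,M}$ into pieces is used, so no cancellation issue ever arises; the price is the somewhat technical construction of the interpolating chain of good systems.
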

The least degree mentioned in Theorem \ref{A} is denoted by $p_\mathcal F(M)$. Let $\mathcal F_0 : 0\subset M$ be  the trivial filtration of $M$. Then every system of parameters of $M$ is good with respect to $\mathcal F_0$. By applying Theorem \ref{A} in this case we get again one of the main results of \cite{C2}. Note that in that paper the invariant $p_{\mathcal F_0}(M)$ was called the polynomial type of $M$ and denoted by $p(M)$. Under some mild assumptions, there are very closed relations between the polynomial type $p(M)$ and the annihilators of the local cohomology modules and the dimension of the non-Cohen-Macaulay locus of $M$. In general it is natural to question how to relate $p_\mathcal F(M)$ with other known invariants of $M$. In the present paper we have not yes had a general answer to this question. When the filtration is the dimension filtration of $M$, i. e., a filtration $\D: D_0\subset D_1\subset\ldots\subset D_t=M$  where $D_i$ is the biggest submodule of $D_{i+1}$ such that $\dim D_i<\dim D_{i+1}$, we give an explicite relation of $p_\mathcal D(M)$ with the polynomial type $p(D_i/D_{i-1})$ and the dimension of the non-sequentially Cohen-Macaulay locus $V_M$ of $M$ provided $R$ is a quotient of a Cohen-Macaulay ring. 
 \begin{theorem}\label{B} Let $R$ be a quotient of a Cohen-Macaulay ring and $\D: D_0\subset D_1\subset\ldots\subset D_t=M$ the dimension filtration of $M$. Then we have 
$$p_\mathcal D(M)=\max\{p(D_i/D_{i-1}): i=1, 2, \ldots, t\}=\dim V_M.$$ 
\end{theorem}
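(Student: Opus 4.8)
The plan is to establish the chain of equalities in Theorem \ref{B} in two parts: first $p_{\mathcal D}(M) = \max\{p(D_i/D_{i-1}) : i = 1, \ldots, t\}$, and then $\max\{p(D_i/D_{i-1})\} = \dim V_M$, where the second equality should follow from the characterization of the polynomial type via non-Cohen-Macaulay loci established in \cite{C2} together with a description of $V_M$ as the union of the non-Cohen-Macaulay loci of the quotients $D_i/D_{i-1}$. The key technical device throughout will be a good system of parameters $\un x = x_1, \ldots, x_d$ with respect to $\mathcal D$, which exists because $R$ is a quotient of a Cohen-Macaulay ring; by Theorem \ref{A} the value $p_{\mathcal D}(M)$ does not depend on which such $\un x$ we pick, so we are free to choose $\un x$ adapted to the filtration.

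**For the first equality**, I would analyze the defining function
$$
\I(\un x(\un n)) = \ell\bigl(M/(x_1^{n_1}, \ldots, x_d^{n_d})M\bigr) - \sum_{i=0}^{t} n_1 \cdots n_{d_i}\, e(x_1, \ldots, x_{d_i}; D_i)
$$
by relating lengths along the filtration. Because $\un x$ is good with respect to $\mathcal D$, one has $D_i \cap (x_{d_i+1}, \ldots, x_d)M = 0$, which lets one split the colength of $M$ modulo the parameter ideal into contributions from the successive quotients $D_i/D_{i-1}$. The goal is an estimate of the shape
$$
\I_{\mathcal D, M}(\un x(\un n)) \;\asymp\; \sum_{i=1}^{t} \I_{\mathcal F_0, D_i/D_{i-1}}(\text{truncated } \un x(\un n))
$$
up to lower-order terms, where on the right $\I_{\mathcal F_0}$ is the polynomial-type defect of $D_i/D_{i-1}$ with respect to the trivial filtration. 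Since $\dim(D_i/D_{i-1}) = d_i$ and the relevant parameters are $x_1, \ldots, x_{d_i}$, each summand is bounded above by a polynomial of degree exactly $p(D_i/D_{i-1})$ in those variables (and no smaller degree works, by definition of $p$). Taking the maximum over $i$ then gives that the least degree of a polynomial in $n_1, \ldots, n_d$ bounding $\I_{\mathcal D, M}(\un x(\un n))$ equals $\max_i p(D_i/D_{i-1})$. The delicate point here is controlling the cross terms: one must show that the Serre multiplicities $e(x_1, \ldots, x_{d_i}; D_i)$ and the partial colengths combine so that the error between $\I_{\mathcal D,M}$ and the sum of the $\I$'s of the quotients is itself bounded by a polynomial of strictly smaller degree than $\max_i p(D_i/D_{i-1})$ — this needs the good-parameter hypothesis in an essential way, plus the additivity of multiplicity along short exact sequences and the standard fact that $\dim D_{i-1} < d_i$ forces the discrepancy into lower degree.

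**For the second equality**, I would recall from \cite{C2} that for a module $N$ which is a quotient of a Cohen-Macaulay ring, $p(N) = \dim(\mathrm{nCM}(N))$, the dimension of the non-Cohen-Macaulay locus (with the convention $\dim \emptyset = -\infty$ or $-1$). Thus $\max_i p(D_i/D_{i-1}) = \max_i \dim(\mathrm{nCM}(D_i/D_{i-1}))$. It then remains to identify $V_M$, the non-sequentially Cohen-Macaulay locus of $M$, with $\bigcup_{i=1}^{t} \mathrm{nCM}(D_i/D_{i-1})$. This is the point at which one uses the local-global principle for sequential Cohen-Macaulayness: $M_{\mathfrak p}$ is sequentially Cohen-Macaulay over $R_{\mathfrak p}$ if and only if the localized dimension filtration of $M_{\mathfrak p}$ has all successive quotients Cohen-Macaulay, and since the dimension filtration localizes compatibly (the $(D_i)_{\mathfrak p}$ form the dimension filtration of $M_{\mathfrak p}$, possibly with repetitions), this happens exactly when $(D_i/D_{i-1})_{\mathfrak p}$ is Cohen-Macaulay for every $i$. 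Taking complements and then dimensions of the resulting closed sets yields $\dim V_M = \max_i \dim \mathrm{nCM}(D_i/D_{i-1})$, completing the chain.

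**The main obstacle** I anticipate is the first equality, specifically the bookkeeping showing that the defect function $\I_{\mathcal D, M}(\un x(\un n))$ agrees with $\sum_i \I$ of the quotients up to a polynomial error of lower degree. The subtlety is that $\ell(M/(x_1^{n_1}, \ldots, x_d^{n_d})M)$ does not split exactly as $\sum_i \ell\bigl((D_i/D_{i-1})/(x_1^{n_1}, \ldots, x_{d_i}^{n_{d_i}})(D_i/D_{i-1})\bigr)$ — there are correction terms coming from how the top parameters $x_{d_i+1}, \ldots, x_d$ act, and from the fact that $D_{i-1}$ sits inside $D_i$ with $\dim D_{i-1} = d_{i-1} < d_i$. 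One has to verify that these corrections are governed by colengths of modules of dimension at most $d_{i-1}$, hence contribute polynomials of degree at most $\max\{d_{i-1}, p(D_i/D_{i-1})\}$; a further argument (or a reduction using that $p(N) \le \dim N - 1$ unless $N$ is not equidimensional, together with properties of the dimension filtration) is needed to absorb these into the claimed bound. I would handle this by inducting on the length $t$ of the filtration, peeling off $D_{t-1}$ and using the good-parameter condition $D_{t-1} \cap (x_{d_{t-1}+1}, \ldots, x_d)M = 0$ to get a clean short exact sequence relating $M$, $D_{t-1}$, and $M/D_{t-1}$, the latter having trivial dimension filtration so that $p(M/D_{t-1}) = p(D_t/D_{t-1})$ is directly accessible.
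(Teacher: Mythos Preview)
Your approach to the second equality, $\max_i p(D_i/D_{i-1})=\dim V_M$, is essentially the paper's: identify $V_M=\bigcup_i\mathrm{nCM}(D_i/D_{i-1})$ via the localization of the dimension filtration (the paper's Lemma~\ref{dimfil}/Lemma~\ref{nCM}) and then apply \cite[Corollary~4.2]{C2}. One omission: the cited result needs $D_i/D_{i-1}$ equidimensional, not merely that $R$ is a quotient of a Cohen--Macaulay ring; this holds here because $\Ass(D_i/D_{i-1})=\Assh(D_i/D_{i-1})$ by construction of the dimension filtration, but you should say so.

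The genuine gap is in the first equality. Your upper bound $p_{\mathcal D}(M)\le\max_i p(D_i/D_{i-1})$ via
\[
\ell(M/\un x(\un n)M)\le\sum_i\ell\bigl((D_i/D_{i-1})/(x_1^{n_1},\ldots,x_{d_i}^{n_{d_i}})(D_i/D_{i-1})\bigr)+\ell(D_0)
\]
is fine and matches the paper's Lemma~\ref{2}. But your proposed lower bound --- that $\I_{\mathcal D,M}(\un x(\un n))$ differs from $\sum_i I_{D_i/D_{i-1}}$ only by terms of degree strictly below $\max_i p(D_i/D_{i-1})$ --- does not go through. The error you are trying to control is
\[
\sum_i\ell\bigl((\un x(\un n)M\cap D_{i-1})/\un x(\un n)D_{i-1}\bigr),
\]
and the good-parameter condition only kills $(x_{d_{i-1}+1},\ldots,x_d)M\cap D_{i-1}$, not the part coming from $(x_1,\ldots,x_{d_{i-1}})M$. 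There is no a priori reason this error has degree $<\max_i p(D_i/D_{i-1})$; in particular your suggestion that it is governed by modules of dimension $\le d_{i-1}$ gives degree $\le d_{i-1}$, which can easily exceed $\max_j p(D_j/D_{j-1})$ (e.g.\ when all quotients but one are Cohen--Macaulay). Thus the cancellation you fear could in principle occur, and the ``$\asymp$'' is not justified.

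The paper does \emph{not} attempt this direct lower bound. Instead it closes the cycle
\[
\dim V_M\;\le\;p_{\mathcal D}(M)\;\le\;\max_i p(D_i/D_{i-1})\;=\;\dim V_M
\]
by proving $\dim V_M\le p_{\mathcal D}(M)$ (Lemma~\ref{3}) via localization: for $\fp$ with $\dim R/\fp>p_{\mathcal D}(M)$ one constructs (Proposition~\ref{localdf}) a good system of parameters of $M$ whose middle segment is good for $M_\fp$, expands $I_{\mathcal D,M}$ as a telescoping sum of Auslander--Buchsbaum multiplicities
\[
e\bigl(x_1,\ldots,x_i;\,(x_{i+2},\ldots,x_d)M:_Mx_{i+1}/((x_{i+2},\ldots,x_d)M+0:_Mx_{i+1})\bigr),
\]
shows these vanish for $i>p_{\mathcal D}(M)$, and transfers the vanishing to $M_\fp$ to conclude $I_{\mathcal D_\fp,M_\fp}\equiv 0$, hence $M_\fp$ is sequentially Cohen--Macaulay. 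This geometric detour is precisely what replaces the lower bound you are missing.
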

The paper is divided into six sections. Some preliminaries on filtrations satisfying the dimension condition and good systems of parameters are given in Section 2. Section 3 is devoted  to proving Theorem \ref{A}. In Section 4 we investigate the behavior of the dimension filtration and good systems  of parameters under localization. These results are used to relate $p_\mathcal D(M)$ with the invariants $p(D_i/D_{i-1}), i=1, \ldots, t$ and the dimension of the non-sequentially Cohen-Macaulay locus of $M$ in Section 5 where we prove Theorem \ref{B}. The behavior of the invariant $p_\F(M)$ under flat extensions is studied in the last section.


\section{Preliminaries}
Throughout this note, let $(R, \frak m)$ be a commutative Noetherian local ring and $M$ a finitely generated $R$-module of dimension $d$. In \cite{CC1}, \cite{CC2} the notions of filtrations satisfying the dimension condition and good system of parameters have been introduced to study the structure of sequentially Cohen-Macaulay and sequentially generalized Cohen-Macaulay modules. In this section we recall briefly the definitions and some first properties of these notion those are used in the remains of this note. For more details we refer to the papers \cite{CC1}, \cite{CC2}.
\begin{de}
(i) We say that a finite filtration of submodules of $M$
$$\mathcal F : \ M_0\subset M_1\subset \cdots \subset M_t=M$$
 satisfies the {\it  dimension condition} if
$\dim M_0<\dim M_1<\ldots <\dim M_{t-1}<\dim M$, where we stipulate that $\dim M=-\infty$ if $M=0$.
\item(ii) A filtration $\D : D_0\subset D_1\subset \cdots \subset D_t=M$ is called the {\it dimension filtration} of $M$ if the following two conditions are satisfied

a) $D_{i-1}$ is the largest submodule of $D_i$ with $\dim D_{i-1}< \dim D_i$ for $i= t, t-1, \ldots, 1$;

b) $D_0=H_\fm^0(M)$ is the $0^{\text{th}}$ local cohomology module of $M$ with respect to the maximal ideal $\fm$.
\item(iii) Let $\F: M_0\subset M_1\subset\ldots\subset M_t=M$ be a filtration satisfying the dimension condition with $d_i=\dim M_i$. A system of parameters $\underline x=x_1, \ldots, x_d$ of $M$ is called a good system of parameters with respect to the filtration $\mathcal{F}$ if $M_i\bigcap(x_{d_i+1},\ldots,x_d)M=0$ for $i=0,1,\ldots,t-1$. A good system of parameters with respect to the dimension filtration is simply called a {\it good system of parameters} of $M$.
\end{de}
\begin{rem}\label{23} 
 The dimension filtration always exists and it is unique, in this note we will denote the dimension filtration by 
$$\mathcal D : D_0\subset D_1\subset\ldots\subset D_t=M.$$
In fact, let $0=\bigcap\limits_{\frak{p}\in\mathrm{Ass}(M)}N(\frak{p})$ be a reduced primary decomposition in $M$, then $D_i=\bigcap\limits_{\dim R/\frak{p}\ge\dim D_{i+1}}N(\frak{p})$.
\end{rem}
\begin{rem}
 Good systems of parameters always exist (see \cite{CC1}, Lemma 2.5). Using the notations as above, denote $d_i=\dim D_i$ and $N_i=\bigcap\limits_{\dim R/\frak{p}\leqslant \dim D_i}N(\frak{p})$. Then $D_i\cap N_i=0$. By the Prime Avoidance Theorem, there is a system of parameters $\underline x=x_1, \ldots, x_d$ of $M$ such that $x_{d_i+1}, \ldots, x_d\in \Ann(M/N_i)$, $i=0, \ldots, t$. Hence $(x_{d_i+1}, \ldots, x_d)M\cap D_i=0$ and $\un x$ is a good system of parameters of $M$. It should be noted that in the last part of this note we sometimes use this idea to show the existence of good systems of parameters of some modules which satisfy some expected properties.
 \end{rem}
\begin{rem}
Let $N$ be a submodule of $M$. From the definition of the dimension filtration there exists a module $D_i$ such that $N\subseteq D_i$ and $\dim N=\dim D_i$. Consequently, if $M_0\subset M_1\subset\ldots\subset M_{t'}=M$ is a filtration satisfying the dimension condition then there are $0\leqslant i_0< i_1<\ldots<i_{t'}$ such that $M_j\subseteq D_{i_j}$ and $\dim M_j=\dim D_{i_j}$. Thus a good system of parameters is also a good system of parameters with respect to any filtration satisfying the dimension condition. Therefore good systems of parameters with respect to a filtration satisfying the dimension condition always exist by the previous argument.
\end{rem}


\section{Proof of Theorem \ref{A}}
In this section we always denote by $\F: M_0\subset M_1\subset \ldots \subset M_t=M$ a filtration of submodules of $M$ satisfying the dimension condition and $\un x=x_1, \ldots, x_d$  a good system of parameters of $M$ with respect to $\F$. Put $d_i=\dim M_i$. It is easy to see that $x_1, \ldots, x_{d_i}$ is a system of parameters of $M_i$ for $i=1, \ldots, t$. So the following difference is well defined
$$I_{\F, M}(\un x)=\ell(M/\un xM)-\sum_{i=0}^te(x_1, \ldots, x_{d_i}, M_i),$$
where $e(x_1,\ldots,x_{d_i};M_i)$ is the Serre multiplicity and we set $e(x_1,\ldots,x_{d_0};M_0)=\ell(M_0)$ if $M_0$ is of finite length. Note that for the case $\F$ is the trivial filtration $0\subset M$, $I_{\F, M}(\un x)$ is just the difference $I_M(\un x)=\ell(M/\un xM)-e(\un x, M)$ which is well known in the theory of Buchsbaum rings. In the general case, it is proved in \cite[Lemma 2.7]{CC1} that $I_{\F, M}(\un x)$ is non-negative. Therefore,  this is a generalization of the well known inequality $\ell(M/\un xM)\geqslant e(\un x, M)$ between the length function and multiplicity of a system of parameters . Moreover, set $\un x(\un n)=x_1^{n_1}, \ldots, x_d^{n_d}$ for any $d$-tuple of positive integers $\un n =n_1, \ldots, n_d$, we can consider $I_{\F, M}(\un x(\un n))$ as a function in $n_1, \ldots, n_d$. By \cite[Proposition 2.9]{CC1}, $I_{\F, M}(\un x(\un n))$ is non-decreasing, i. e., $I_{\F, M}(\un x(\un n))\leqslant I_{\F, M}(\un x(\un m))$ for all $n_i\leqslant m_i$, $i=1, \ldots, d$. The function $I_{\F, M}(\un x(\un n))$ has been first considered in \cite{CC1}, \cite{CC2} and is a useful tool in the study of the structure of sequentially Cohen-Macaulay and sequentially generalized Cohen-Macaulay modules.
We begin with some lemmas which is needed in the proof of Theorem \ref{A}.
\begin{lem}\label{noisop}
Let $\F: M_0\subset M_1\subset\ldots\subset M_t=M$ be a filtration satisfying the dimension condition and  $\underline x=x_1, \ldots, x_d$, $\underline y=y_1, \ldots, y_d$  good systems of parameters of $M$ with respect to $\mathcal F$. Then there exist  a good system of parameters $\underline z=z_1, \ldots, z_d$ of $M$ with respect to $\mathcal F$ and positive integers $r_1,\ldots,r_d, s$ such that $z_1\ldots,z_i,x_{i+1}^{r_{i+1}},\ldots,x_d^{r_d}$ and $z_1\ldots,z_i,y_{i+1}^s,\ldots,y_d^{s}$ are  good systems of parameters of $M$ with respect to $\mathcal F$ for $i=1,\ldots , d$ and
$$(x_1^{r_1},\ldots,x_d^{r_d})+\mathrm{Ann}M\subseteq(z_1,x_2^{r_2}\ldots,x_d^{r_d})+\mathrm{Ann}M\subseteq\ldots\subseteq(\underline{z})+\mathrm{Ann}M$$
$$\subseteq(z_1,\ldots,z_{d-1},y_d^s)+\mathrm{Ann}M\subseteq\ldots\subseteq(y_1^{s},\ldots,y_d^{s})+\mathrm{Ann}M.$$
\end{lem}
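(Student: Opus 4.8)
The idea is to interpolate between the two good systems of parameters $\underline x$ and $\underline y$ by replacing one coordinate at a time, going "down" from $\underline x$ to $\underline z$ and then "up" from $\underline z$ to $\underline y$, while keeping control of the ideals generated. The main tool is the Prime Avoidance Theorem applied to a carefully chosen finite set of primes, together with the observation (from the last Remark in Section 2) that being a good system of parameters can be detected via the conditions $M_i\cap(x_{d_i+1},\ldots,x_d)M=0$, and hence via membership of the tail elements in $\Ann(M/N_i)$ for appropriate submodules $N_i$.

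First I would construct the $z_i$ inductively. At stage $i$, having already chosen $z_1,\ldots,z_{i-1}$, I want to pick $z_i$ so that $z_1,\ldots,z_i$ together with suitable powers of the tails of both $\underline x$ and $\underline y$ still form good systems of parameters, and so that the ideal membership chain is respected. Concretely, $z_i$ should be chosen in a sufficiently high power of $\fm$ (to make it a parameter jointly with everything constructed so far and with the relevant tails), should avoid all primes in $\bigcup_j\Ass(M/(z_1,\ldots,z_{i-1})M_j)$ except $\fm$-primary ones (to preserve the parameter property and the good-system property at each level $M_j$), and should additionally be forced to lie in whatever ideal we need it to lie in for the containment chain — for the "down" direction, $z_i$ should be taken inside $(x_i^{r_i},\ldots,x_d^{r_d})+\Ann M$ for suitable exponents $r_i$. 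After all $z_i$ are chosen, the exponents $r_i$ and $s$ are taken large enough (using the non-decreasing and finite-length behavior recalled at the start of Section 3, or simply Artin–Rees/Krull intersection) so that the powers $x_{i+1}^{r_{i+1}},\ldots,x_d^{r_d}$ and $y_{i+1}^s,\ldots,y_d^s$ lie in $(z_1,\ldots,z_i)+\Ann M$ modulo the next ideal in the chain, which gives the successive inclusions.

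The verification then splits into two routine checks: (1) each listed tuple is genuinely a system of parameters which is good with respect to $\F$ — this follows because at each level $M_i$ the elements $x_1,\ldots,x_{d_i}$ of the tuple (after the substitutions) avoid all non-$\fm$-primary associated primes of the partial quotient, so they remain a system of parameters of $M_i$, and the tail elements were chosen in the annihilators $\Ann(M/N_{d_i})$ guaranteeing $M_i\cap(\text{tail})M=0$; (2) the ideal containments hold, which is immediate once $z_i\in(x_i^{r_i},\ldots,x_d^{r_d})+\Ann M$ and, symmetrically on the other side, $y_d^s,\ldots,y_{i+1}^s\in(z_1,\ldots,z_i)+\Ann M$ for $s\gg 0$.

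The main obstacle I anticipate is doing all of this \emph{simultaneously}: the choice of $z_i$ must respect the good-system condition at \emph{every} level $M_0,\ldots,M_{t-1}$ at once, while also landing in a prescribed ideal, and the exponents must be chosen uniformly ($s$ the same for all tails of $\underline y$) yet large enough to validate every inclusion in the chain. Getting the order of quantifiers right — first fix the $z_i$ by prime avoidance against a single finite list of primes assembled from all levels, then afterwards choose $r_1,\ldots,r_d$ and $s$ large enough — is the delicate bookkeeping step, but each individual ingredient (prime avoidance, Krull intersection, the characterization of good systems via annihilators) is standard.
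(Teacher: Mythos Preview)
Your overall plan---prime avoidance combined with the annihilator description of goodness via the submodules $N_i$---is exactly the mechanism the paper uses, so the strategy is sound. However, the bookkeeping in your sketch does not quite close, and the place it fails is precisely the ``delicate'' step you flag at the end.

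Two concrete problems. First, you ask that $z_i\in (x_i^{r_i},\ldots,x_d^{r_d})+\Ann M$, but this yields the \emph{reverse} inclusion: it gives $(z_1,x_2^{r_2},\ldots,x_d^{r_d})+\Ann M\subseteq (x_1^{r_1},\ldots,x_d^{r_d})+\Ann M$, whereas the lemma requires the opposite. What is actually needed on the $x$-side is $x_i^{r_i}\in (z_1,\ldots,z_i,x_{i+1}^{r_{i+1}},\ldots,x_d^{r_d})+\Ann M$. Second, your quantifier order is circular: you choose $z_i$ inside an ideal that depends on the $r_i$, and then say the $r_i$ are chosen after all the $z_i$. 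Relatedly, the claim that $y_{i+1}^s,\ldots,y_d^s\in(z_1,\ldots,z_i)+\Ann M$ for $s\gg 0$ is false in general, since $(z_1,\ldots,z_i)+\Ann M$ has dimension $d-i$ in $R/\Ann M$ and cannot contain powers of the remaining parameters.

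The paper avoids all of this by decoupling ``goodness'' from ``ideal containment''. It first builds an auxiliary system $\underline w$ by prime avoidance: at step $i$ one chooses $w_i\in\bigcap_{k\leq j}\Ann(M/N_k)$ (where $d_j<i\leq d_{j+1}$) avoiding the minimal primes of $M/(w_1,\ldots,w_{i-1},x_{i+1},\ldots,x_d)M$ and of $M/(w_1,\ldots,w_{i-1},y_{i+1}^s,\ldots,y_d^s)M$; this is possible because $x_i$ and $y_i^s$ already lie in that intersection of annihilators and avoid those primes. No containment condition is imposed on $w_i$. Only afterwards does one set $z_i=w_i^{t_i}$ with $t_i$ chosen (inductively in $i$) so that $z_i\in(z_1,\ldots,z_{i-1},y_i^s,\ldots,y_d^s)+\Ann M$, which gives the right half of the chain; and finally one chooses $r_d,\ldots,r_1$ (inductively downward) so that $x_i^{r_i}\in(z_1,\ldots,z_i,x_{i+1}^{r_{i+1}},\ldots,x_d^{r_d})+\Ann M$, which gives the left half. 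Both of these last steps are possible simply because the ideals in question share the same radical. Separating the three stages is what makes the order of choices consistent.
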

\begin{proof} Let $\bigcap\limits_{\frak{p}\in\mathrm{Ass}(M)}N(\frak{p})=0$ be a reduced primary decomposition of the zero submodule of $M$, where $N(\frak{p})$ is a $\frak{p}$-primary submodule.  Since 
$$M_i/(M_i\cap N(\frak{p}))\cong (M_i+N(\frak{p}))/N(\frak{p})\subset M/N(\frak{p}),$$   it follows that $M_i\not\subseteq N(\frak{p})$ if and only if  $\mathrm{Ass}( M_i/M_i\bigcap N(\frak{p}))=\{\frak{p}\}$, i. e., $M_i\bigcap N(\frak{p})$ is a $\frak{p}$-primary submodule of $M_i$. Hence $M_i\subseteq N(\frak{p})$, if $\frak{p}\not\in\mathrm{Ass}M_i$. Therefore  $M_i\subseteq \bigcap\limits_{\frak{p}\not\in\mathrm{Ass}M_i}N(\frak{p})$.  This implies that $\sqrt{\mathrm{Ann}M_i}=\sqrt{\mathrm{Ann}M/N_i}$, where $N_i=\bigcap\limits_{\frak{p}\in\mathrm{Ass}M_i}N(\frak{p})$. Set $d_i=\dim M_i$.  Since $\underline x$ and $\underline y$ are  good systems of parameters of $M$ with respect to $\mathcal F$, we have $(y_{d_i+1},\ldots,y_d)\subseteq \mathrm{Ann}M_i$. Hence there is a positive integer $s$ such that $(y_{d_i+1}^s,\ldots,y_d^s)\subseteq \mathrm{Ann}M/N_i$ for all $i=1,\ldots,t$. Similarly, replace $x_i$ with  $x_i^n$ for large enough $n$, we can assume without any loss of generality that $(x_{d_i+1},\ldots,x_d)\subseteq \mathrm{Ann}M/N_i$ for all $i=1,\ldots, t$.

Next, we claim that there is a good system of parameters $\underline{w}=w_1, \ldots, w_d$ with respect to $\mathcal{F}$ such that for all $i\in \{0, 1,\ldots, d\}$, $w_1,\ldots,w_i,x_{i+1},\ldots,x_d$ and $w_1,\ldots,w_i$, $y_{i+1}^s,\ldots,y_d^s$ are good systems of parameters of $M$ with respect to $\mathcal{F}$. In fact, the case $i=0$ is trivial. Assume that $i>0$ and we have chosen $w_1,\ldots,w_{i-1}$. Let $S$ be the set of all minimal associated prime ideals of $M/(w_1,\ldots,w_{i-1},x_{i+1},\ldots,x_d)M$ and of $M/(w_1, \ldots, w_{i-1}, y_{i+1}^s, \ldots, y_d^s)M$. Since $w_1,\ldots, w_{i-1}, x_i, \ldots, x_d$ and $w_1,\ldots,w_{i-1},y_i^s,\ldots,y_d^s$ are system of parameters of $M$,  $(x_i, y_i^s)\not\subseteq\bigcup\limits_{\fp\in S}\fp$. Assume that $d_j< i\leqslant d_{j+1}$ for some $j$. 
Keep in mind  that $(x_i, y_i^s)\subseteq \bigcap\limits_{k\leqslant j}\mathrm{Ann}(M/N_k)$. Therefore we can choose an element $w_i \in \bigcap\limits_{k\leqslant j}\mathrm{Ann}(M/N_k)$ such that $w_i\not\in\frak{p}$ for any $\frak{p}\in S$. It follows that $w_1,\ldots,w_i,x_{i+1},\ldots,x_d$, $w_1,\ldots,w_i,y_{i+1}^s,\ldots,y_d^s$ are systems of parameters of $M$. By the first part of the proof and the choice of $w_i$ we have for all $k\leqslant j,$
$$(w_{d_k+1},\ldots,w_i,x_{i+1},\ldots,x_d)M\subseteq N_k,$$
$$(w_{d_k+1},\ldots,w_i,y_{i+1}^s,\ldots,y_d^s)M\subseteq N_k.$$
Therefore
$$(w_{d_k+1},\ldots,w_i,x_{i+1},\ldots,x_d)M\cap M_k=(w_{d_k+1},\ldots,w_i,y_{i+1}^s,\ldots,y_d^s)M\cap M_k=0.$$
Thus $w_1,\ldots,w_i,x_{i+1},\ldots,x_d$, $w_1,\ldots,w_i,y_{i+1}^s,\ldots,y_d^s$ are good system of parameters of $M$ with respect to  $\mathcal{F}$ and the claim is proved.
Now, by inductive method  there are positive integers $t_1,\ldots , t_d$ such that 
$$w_i^{t_i} \in (w_1^{t_1}, \ldots, w_{i-1}^{t_{i-1}}, y_i^s,\ldots,y_d^s)+\mathrm{Ann}M.$$
Set $z_i=w_i^{t_i}$. Then it is easy to check that  $z_1, \ldots, z_d$ is the required systems of parameters, where  $r_1, \ldots, r_d$ can be chosen so  that 
$$x_i^{r_i}\in (z_1,\ldots,z_{i},x_{i+1}^{r_{i+1}},\ldots,x_d^{r_d})+\mathrm{Ann}M.$$
\end{proof}

\begin{lem}\label{inequal}
Let $\underline{x}$, $\underline{y}$ be two good systems of parameters of $M$ with respect to $\mathcal{F}$. Assume  that there exists a positive integer $i$ such that $x_j=y_j$ for all $j\not=i$ and $(\underline{y})+\mathrm{Ann}M\subseteq(\underline{x})+\mathrm{Ann}M$. Then $I_{\mathcal{F},M}(\underline{x})\leqslant I_{\mathcal{F},M}(\underline{y})$.
\end{lem}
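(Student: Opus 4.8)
The plan is to reduce everything to a computation in the single variable $x_i = y_i$ direction, treating the other parameters as fixed. Write $\un{x} = x_1,\ldots,x_d$ and $\un{y} = y_1,\ldots,y_d$ with $x_j=y_j$ for $j\neq i$, and set $x=x_i$, $y=y_i$. The hypothesis $(\un y) + \Ann M \subseteq (\un x) + \Ann M$ means that, modulo the ideal $\fq = (x_j : j\neq i) + \Ann M$, we have $y \in (x) + \fq$; equivalently $y$ divides into $x$ up to the other parameters, so multiplication by $x$ factors through multiplication by $y$ on the relevant quotients. Concretely, I would first establish that $\ell\bigl(M/(\un x)M\bigr) \le \ell\bigl(M/(\un y)M\bigr)$: since $(\un y)M \subseteq (\un x)M$, there is a natural surjection $M/(\un y)M \twoheadrightarrow M/(\un x)M$, giving the length inequality for the first term of $I_{\F,M}$.

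Next I would handle the multiplicity terms. The key point is that for each $i' \le t$ with $d_{i'} \ge i$, the systems $x_1,\ldots,x_{d_{i'}}$ and $y_1,\ldots,y_{d_{i'}}$ are both systems of parameters of $M_{i'}$ differing only in the $i$-th slot, and one checks that the containment $(\un y) + \Ann M \subseteq (\un x) + \Ann M$ restricts to the containment of the corresponding ideals in $M_{i'}$ (using that $(x_{d_{i'}+1},\ldots,x_d) = (y_{d_{i'}+1},\ldots,y_d)$ annihilate $M_{i'}$ up to the good-parameter condition, so only $x_1,\ldots,x_{d_{i'}}$ and $y_1,\ldots,y_{d_{i'}}$ matter). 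For $i' < t$ with $d_{i'} < i$ the two multiplicities coincide since $M_{i'}$ sees the same parameters. So it suffices to show that for a module $N$ and two systems of parameters $u_1,\ldots,u_e$ and $v_1,\ldots,v_e$ differing in one coordinate with $(v_1,\ldots,v_e)+\Ann N \subseteq (u_1,\ldots,u_e)+\Ann N$, one has $e(\un u;N) = e(\un v;N)$. This follows from the additivity/multiplicativity of Serre multiplicity: writing $v_i = c\, u_i + (\text{other } u_j)$ modulo $\Ann N$ would force $c$ to be a unit (otherwise $v_i$ would not be part of a system of parameters), hence $e(\un u; N) = e(\un v; N)$ — actually the clean way is to invoke that both ideals $(\un u)+\Ann N$ and $(\un v)+\Ann N$ are parameter ideals of $N$ with the second contained in the first and both generated modulo $\Ann N$ by $e = \dim N$ elements, so they have the same multiplicity by the associativity formula for multiplicities (the containment forces them to have equal multiplicity since a proper containment of parameter ideals would strictly decrease multiplicity only if... ) — I would cite the standard fact $e(\un u;N)\le e(\un v;N)$ from the containment, and equality because both are parameter ideals of the same module.

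Combining: $\ell(M/(\un x)M) \le \ell(M/(\un y)M)$ while $\sum_{i'=0}^t e(x_1,\ldots,x_{d_{i'}};M_{i'}) = \sum_{i'=0}^t e(y_1,\ldots,y_{d_{i'}};M_{i'})$, which yields $I_{\F,M}(\un x) \le I_{\F,M}(\un y)$ immediately. The main obstacle I anticipate is the careful bookkeeping in the second step: verifying that the hypothesized containment of ideals in $M$ descends correctly to each $M_{i'}$, and pinning down precisely why the multiplicity terms are unchanged rather than merely comparable. The good-system-of-parameters condition ($M_{i'} \cap (x_{d_{i'}+1},\ldots,x_d)M = 0$) is what makes the descent work, and one should be careful that the modification only affects the $i$-th parameter so that for modules $M_{i'}$ with $d_{i'} < i$ nothing changes at all, while for $d_{i'} \ge i$ the multiplicity is governed by a parameter ideal on $M_{i'}$ whose radical is unchanged. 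An alternative, possibly cleaner route: prove the length inequality as above and separately show $I_{\F,M}(\un x) - \ell(M/(\un x)M)$ and $I_{\F,M}(\un y) - \ell(M/(\un y)M)$ are equal by reducing the multiplicity comparison to the associativity formula $e(\un u;N) = \sum_{\fp} \ell_{N_\fp}(N_\fp)\, e(\un u; R/\fp)$ over primes $\fp$ with $\dim R/\fp = \dim N$, where only the residues of the $u_j$ modulo $\fp$ enter and the one-coordinate change by a unit multiple (modulo $\fp$) leaves each term fixed.
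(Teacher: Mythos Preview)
Your approach has a genuine gap: the claim that the multiplicity sums are \emph{equal} is false. Containment of parameter ideals gives only an inequality, not equality. For instance, with $N=k[[t]]$, $u=t$, $v=t^2$ we have $(v)\subset(u)$ yet $e(v;N)=2>1=e(u;N)$. Your argument that ``$c$ must be a unit'' is incorrect: in $N=k[[s,t]]$ with $u_1=s,\ u_2=t,\ v_1=s,\ v_2=t^2$, both are systems of parameters with $(v_1,v_2)\subset(u_1,u_2)$ but $v_2=t\cdot u_2$ and $t$ is not a unit. The associativity-formula variant fails for the same reason: modulo a minimal prime $\fp$ the replaced coordinate need not become a unit multiple of the old one.

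Since both $\ell(M/(\un x)M)\le\ell(M/(\un y)M)$ and $\sum_{i'}e(x_1,\ldots,x_{d_{i'}};M_{i'})\le\sum_{i'}e(y_1,\ldots,y_{d_{i'}};M_{i'})$ go the \emph{same} direction, you cannot conclude anything about the difference $I_{\F,M}$ by treating the two terms separately. Even for the trivial filtration $\F_0:0\subset M$, the inequality $\ell(M/(\un x)M)-e(\un x;M)\le\ell(M/(\un y)M)-e(\un y;M)$ is a nontrivial result from \cite{C2}, not a consequence of the naive length and multiplicity bounds. The paper's proof proceeds instead by induction on $d$: for $i=d$ it invokes \cite{C2} directly; for $i<d$ it passes to $M/x_dM$ with the quotient filtration $\F/x_d\F$ and uses the recursive identity
\[
I_{\F,M}(\un x)=I_{\F/x_d\F,\,M/x_dM}(x_1,\ldots,x_{d-1})+e(x_1,\ldots,x_{d-1};\,0:_Mx_d/M_{t-1})
\]
from \cite[Lemma 2.7]{CC1}, applying the induction hypothesis to the first summand and the straightforward multiplicity inequality to the second.
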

\begin{proof}We prove the lemma by induction on  $d=\dim M$. The case $d=1$ is obvious. Assume $d>1$. Since  $\ell(M/(\underline{x})M)-e(\underline{x};M)\leqslant\ell(M/(\underline{y})M)-e(\underline{y};M)$ by \cite{C2},  the case $i=d$ is proved. Assume now that $i<d$.  From Lemma 2.7 of \cite{CC1}, $x_1,\ldots, x_{d-1}$ is a good system of parameters of $M/x_dM$ with respect to the  filtration satisfying the dimension condition
$$\mathcal{F}/x_d\mathcal{F}:(M_0+x_dM)/x_dM\subset\ldots\subset (M_s+x_dM)/x_dM\subset M/x_dM,$$ 
where $s=t-1$ if $d_{t-1}<d-1$ and $s=t-2$ if $d_{t-1}=d-1$,  and $$I_{\mathcal{F},M}(\underline{x})=I_{\mathcal{F}/x_d\mathcal{F},M/x_dM}(x_1,\ldots,x_{d-1})+e(x_1,\ldots,x_{d-1};0:_Mx_d/M_{t-1}),$$ 
$$I_{\mathcal{F},M}(\underline{y})=I_{\mathcal{F}/x_d\mathcal{F},M/x_dM}(y_1,\ldots,y_{d-1})+e(y_1,\ldots,y_{d-1};0:_Mx_d/M_{t-1}).$$
Therefore $I_{\mathcal{F},M}(\underline{x})\leqslant I_{\mathcal{F},M}(\underline{y})$ by the induction hypothesis and the fact that $$e(x_1,\ldots,x_{d-1};0:_Mx_d/M_{t-1})\leqslant e(y_1,\ldots,y_{d-1};0:_Mx_d/M_{t-1}).$$ 
 \end{proof}

\begin{lem}\label{poly}
Let $\mathcal{F}$ be a filtration satisfying the dimension condition and $\underline{x}=x_1, \ldots, x_d$ a good system of parameters with respect to $\mathcal{F}$. Assume that $I_{\mathcal{F},M}(\underline{x})\not=0$. Then there is a constant $c$ such that $I_{\mathcal{F},M}(\underline x(\underline n))\leqslant cn_1\ldots n_dI_{\mathcal{F},M}(\underline{x})$, for all $n_1,\ldots, n_d>0$. In particular, $I_{\mathcal{F},M}(\underline x(\underline n))$ is bounded above by a polynomial in $n_1, \ldots, n_d$.
\end{lem}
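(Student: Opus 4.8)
The plan is to derive Lemma~\ref{poly} from a single elementary estimate on the lengths of rectangular quotients, namely the inequality
\[
\ell\big(M/(x_1^{n_1},\ldots,x_d^{n_d})M\big)\ \leqslant\ n_1\cdots n_d\,\ell\big(M/(x_1,\ldots,x_d)M\big)
\]
valid for all positive integers $n_1,\ldots,n_d$ and every system of parameters $x_1,\ldots,x_d$ of $M$ (call it $(\star)$); both sides are finite since $(x_1,\ldots,x_d)$ is $\fm$-primary on $M$. I would prove $(\star)$ by induction on $d$, the case $d=0$ being trivial. For $d\geqslant 1$, put $\mathfrak a=(x_1^{n_1},\ldots,x_{d-1}^{n_{d-1}})$, observe that $M/(x_1^{n_1},\ldots,x_d^{n_d})M=(M/x_d^{n_d}M)\big/\mathfrak a(M/x_d^{n_d}M)$, and filter $M/x_d^{n_d}M$ by the submodules $x_d^{j}M/x_d^{n_d}M$ for $j=0,\ldots,n_d$. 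The $j$-th successive quotient $x_d^{j}M/x_d^{j+1}M$ is isomorphic to $M/\big(x_dM+(0:_Mx_d^{j})\big)$, hence is a homomorphic image of $M/x_dM$. Running the filtration through $-\otimes_R R/\mathfrak a$ and using the elementary fact that $\ell(B/\mathfrak aB)\leqslant\ell(A/\mathfrak aA)+\ell(C/\mathfrak aC)$ for every short exact sequence $0\to A\to B\to C\to 0$, I obtain
\[
\ell\big(M/(x_1^{n_1},\ldots,x_d^{n_d})M\big)\ \leqslant\ n_d\,\ell\big((M/x_dM)\big/\mathfrak a(M/x_dM)\big),
\]
and the induction hypothesis applied to $M/x_dM$ --- of dimension $d-1$, with system of parameters $x_1,\ldots,x_{d-1}$ --- closes the induction, since $(M/x_dM)/(x_1,\ldots,x_{d-1})(M/x_dM)=M/(x_1,\ldots,x_d)M$.

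Granting $(\star)$, the remainder is bookkeeping. Using $e(x_1^{n_1},\ldots,x_{d_i}^{n_{d_i}};M_i)=n_1\cdots n_{d_i}\,e(x_1,\ldots,x_{d_i};M_i)$ we have
\[
I_{\mathcal F,M}(\un x(\un n))=\ell\big(M/(x_1^{n_1},\ldots,x_d^{n_d})M\big)-\sum_{i=0}^{t}n_1\cdots n_{d_i}\,e(x_1,\ldots,x_{d_i};M_i).
\]
Substituting $\ell(M/\un xM)=I_{\mathcal F,M}(\un x)+\sum_{i=0}^{t}e(x_1,\ldots,x_{d_i};M_i)$ into $(\star)$, and using that $d_t=d$ while $n_1,\ldots,n_d\geqslant 1$ (so $0\leqslant n_1\cdots n_d-n_1\cdots n_{d_i}\leqslant n_1\cdots n_d$ for $i<t$ and the $i=t$ summand vanishes), I get
\[
I_{\mathcal F,M}(\un x(\un n))\ \leqslant\ n_1\cdots n_d\Big(I_{\mathcal F,M}(\un x)+\sum_{i=0}^{t-1}e(x_1,\ldots,x_{d_i};M_i)\Big).
\]
Since $I_{\mathcal F,M}(\un x)$ is a nonzero non-negative integer it is at least $1$, so the lemma holds with $c=1+\sum_{i=0}^{t-1}e(x_1,\ldots,x_{d_i};M_i)$; in particular $I_{\mathcal F,M}(\un x(\un n))$ is bounded above by the polynomial $c\,I_{\mathcal F,M}(\un x)\,n_1\cdots n_d$.

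The only step carrying real content is $(\star)$, and it is just the ``successive quotients'' argument, so I anticipate no serious obstacle --- the most delicate point is merely setting up the filtration cleanly, in particular the isomorphism $x_d^{j}M/x_d^{j+1}M\cong M/(x_dM+(0:_Mx_d^{j}))$ and the right-exactness bookkeeping. It is worth recording where the hypothesis $I_{\mathcal F,M}(\un x)\neq 0$ is needed: the multiplicities $e(x_1,\ldots,x_{d_i};M_i)$ with $i<t$ contribute a genuine defect $\sum_{i<t}(n_1\cdots n_d-n_1\cdots n_{d_i})\,e(x_1,\ldots,x_{d_i};M_i)$ that cannot in general be absorbed into a constant multiple of $I_{\mathcal F,M}(\un x)$ unless that integer is positive.
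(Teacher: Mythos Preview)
Your proof is correct and follows essentially the same route as the paper: the paper cites \cite[Lemma~2.1]{C2} for the inequality $\ell(M/\un x(\un n)M)-n_1\cdots n_d\,e(\un x;M)\leqslant n_1\cdots n_d\big(\ell(M/\un xM)-e(\un x;M)\big)$, which after cancelling the $e(\un x;M)$ terms is exactly your $(\star)$, and then finishes with the same one-line bookkeeping (dropping the $i<t$ multiplicities rather than bounding $n_1\cdots n_d-n_1\cdots n_{d_i}$, yielding $c=(\ell(M/\un xM)-e(\un x;M))/I_{\mathcal F,M}(\un x)$). The only difference is that you supply a self-contained induction for $(\star)$ instead of citing it.
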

\begin{proof} By \cite[Lemma 2.1]{C2} we have 
$$\ell(M/(\underline x(\underline n))M)-n_1\ldots n_de(\underline{x};M)\leqslant n_1\ldots n_d\big(\ell(M/(\underline{x})M)-e(\underline{x};M)\big).$$
Hence 
$$I_{\mathcal{F},M}(\underline x(\underline n))\leqslant n_1\ldots n_d\big(\ell(M/(\underline{x})M)-e(\underline{x};M)\big)\leqslant cn_1\ldots n_dI_{\mathcal{F},M}(\underline{x}),$$ 
where $c=\big(\ell(M/(\underline{x})M)-e(\underline{x};M)\big)/I_{\mathcal{F},M}(\underline{x})$. 
\end{proof}
Note that without the assumption $I_{\mathcal{F},M}(\underline{x})\not=0$ the lemma  is no longer true. The examples can be found in   \cite [Example 4.7]{CC1}.

\begin{cor}\label{bdt}
Let  $\underline{x},\:\underline{y}$ be two good systems of parameters of $M$ with respect to $\mathcal{F}$ as in Lemma \ref{inequal}. Assume in addition that $I_{\mathcal{F},M}(\underline{y})\not=0$.  Then there is a constant $c$ such that
$$I_{\mathcal{F},M}(x_1^t,\ldots,x_d^t)\leqslant cI_{\mathcal{F},M}(x_1^t,\ldots, x_{i-1}^t, y_i^{t}, x_{i+1}^t,\ldots,x_d^t)$$
for all positive integers $t$.
\end{cor}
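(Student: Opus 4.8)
The plan is to squeeze $x_1^t,\dots,x_d^t$ between two systems built from $\underline y$ and then invoke Lemmas~\ref{inequal} and~\ref{poly}. First I would note that raising the entries of a good system of parameters with respect to $\mathcal F$ to positive powers, and replacing $x_i$ by $y_i$ in the $i$-th slot, again yields a good system of parameters with respect to $\mathcal F$: the relevant tail ideals only get smaller, so each condition $M_k\cap(\ \cdot\ )M=0$ persists. The hypothesis $(\underline y)+\Ann M\subseteq(\underline x)+\Ann M$ says exactly that $y_i\in(x_1,\dots,x_d)+\Ann M$; since every monomial of degree $d(t-1)+1$ in $d$ variables is divisible by some $x_j^{\,t}$, this gives $y_i^{\,dt}\in(x_1^t,\dots,x_d^t)+\Ann M$. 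Put $\underline z=x_1^t,\dots,x_{i-1}^t,\,y_i^{\,dt},\,x_{i+1}^t,\dots,x_d^t$. Then $\underline z$ and $x_1^t,\dots,x_d^t$ are good systems of parameters differing only in the $i$-th entry, with $(\underline z)+\Ann M\subseteq(x_1^t,\dots,x_d^t)+\Ann M$, so Lemma~\ref{inequal} gives
$$I_{\mathcal F,M}(x_1^t,\dots,x_d^t)\le I_{\mathcal F,M}(\underline z).$$

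Second, I would compare $I_{\mathcal F,M}(\underline z)$ with $I_{\mathcal F,M}(\underline w)$, where $\underline w=x_1^t,\dots,x_{i-1}^t,y_i^{\,t},x_{i+1}^t,\dots,x_d^t$ (which equals $\underline y(t,\dots,t)$, as $x_j=y_j$ for $j\ne i$). As $\underline z$ arises from $\underline w$ by raising the $i$-th entry to the $d$-th power, the inequality \cite[Lemma~2.1]{C2} underlying Lemma~\ref{poly}, combined with a term-by-term comparison of the multiplicities occurring in $I_{\mathcal F,M}(\underline z)$ and in $d\,I_{\mathcal F,M}(\underline w)$ (these agree for the pieces $M_k$ with $d_k<i$ and differ by the factor $d$ for those with $d_k\ge i$), yields
$$I_{\mathcal F,M}(\underline z)\le d\,I_{\mathcal F,M}(\underline w)+(d-1)\sum_{d_k<i}t^{\,d_k}\,e(y_1,\dots,y_{d_k};M_k).$$
Moreover $I_{\mathcal F,M}(\underline w)\ge I_{\mathcal F,M}(\underline y)\ne 0$ by \cite[Proposition~2.9]{CC1}, so $I_{\mathcal F,M}(\underline w)\ge 1$ for every $t$.

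The whole difficulty is now to absorb the error term with a constant $C$ independent of $t$, i.e.\ $\sum_{d_k<i}t^{\,d_k}e(y_1,\dots,y_{d_k};M_k)\le C\,I_{\mathcal F,M}(\underline w)$; the corollary then follows with $c=d(1+C)$. Since the dominant summand on the left is $t^{\,\delta}$ times a positive constant, $\delta=\max\{d_k:d_k<i\}$, this reduces to a lower bound $I_{\mathcal F,M}(\underline y(t,\dots,t))\ge\gamma\,t^{\,\delta}$ for some $\gamma>0$ and all $t$, valid because $I_{\mathcal F,M}(\underline y)\ne 0$. I would attack this estimate by induction on $d=\dim M$ through the recursion obtained in the proof of Lemma~\ref{inequal},
$$I_{\mathcal F,M}(\underline y(t,\dots,t))=I_{\mathcal F/y_d^{\,t}\mathcal F,\,M/y_d^{\,t}M}\!\bigl((y_1,\dots,y_{d-1})(t,\dots,t)\bigr)+e\bigl(y_1^{\,t},\dots,y_{d-1}^{\,t};(0:_My_d^{\,t})/M_{t-1}\bigr),$$
distinguishing whether $\dim\!\bigl((0:_My_d^{\,t})/M_{t-1}\bigr)$ equals $d-1$ for $t\gg 0$ — in which case the second term alone is $\gtrsim t^{\,d-1}$, using additivity of multiplicity on short exact sequences and the monotone growth of $0:_My_d^{\,t}$ in $t$ — or stays below $d-1$, in which case that term vanishes and $M/y_d^{\,t}M$ is fed to the induction hypothesis. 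This is precisely the crux: unlike $\ell(M/\underline yM)-e(\underline y;M)$, the invariant $I_{\mathcal F,M}$ may vanish on good systems of parameters — it measures the failure of sequential Cohen–Macaulayness — so no cheap lower bound is available, and one must show that once $I_{\mathcal F,M}(\underline y)\ne 0$ the function $I_{\mathcal F,M}(\underline y(t,\dots,t))$ already grows of order at least $t^{\,\delta}$. In the boundary case $\delta=d_{t-1}$ (that is, $i>d_{t-1}$, e.g.\ $i=d$) I expect the induction alone not to suffice and that one must bring in the polynomial-type comparison for the trivial filtration from \cite{C2}, exactly as the case $i=d$ of Lemma~\ref{inequal} does.
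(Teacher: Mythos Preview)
Your argument has a genuine gap at the absorption step. After the correct chain
\[
I_{\mathcal F,M}(x_1^t,\dots,x_d^t)\ \le\ I_{\mathcal F,M}(\underline z)\ \le\ d\,I_{\mathcal F,M}(\underline w)\ +\ (d-1)\sum_{d_k<i}t^{\,d_k}\,e(y_1,\dots,y_{d_k};M_k),
\]
you propose to absorb the last sum by establishing a lower bound $I_{\mathcal F,M}(\underline y(t,\dots,t))\ge\gamma\,t^{\,\delta}$ with $\delta=\max\{d_k:d_k<i\}$. That lower bound is false in general. If $M$ is sequentially generalized Cohen--Macaulay but not sequentially Cohen--Macaulay, then by Corollary~3.6 (equivalently \cite[Theorem~4.6]{CC2}) the function $I_{\mathcal F,M}(\underline y(\underline n))$ is nonzero and \emph{bounded}, i.e.\ $p_{\mathcal F}(M)=0$. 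Yet as soon as the filtration has an intermediate piece $M_k$ with $0<d_k<i$ (for instance take $i=d$ and any $M_k$ of positive dimension), one has $\delta>0$ and the error term $\sum_{d_k<i}t^{d_k}e(y_1,\dots,y_{d_k};M_k)$ genuinely grows like $t^{\delta}$, since the multiplicities are strictly positive. So no constant $C$ can make that sum $\le C\,I_{\mathcal F,M}(\underline w)$, and the inductive scheme you sketch cannot succeed: it would manufacture a polynomial lower bound on $I_{\mathcal F,M}$ that these bounded examples rule out. The structural problem is that \cite[Lemma~2.1]{C2}, which underlies Lemma~\ref{poly}, controls only the trivial-filtration difference $I_M=\ell-e$; when you convert back to $I_{\mathcal F,M}$ you pick up exactly the multiplicity terms you are then unable to discard.

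By contrast the paper disposes of the corollary in a single sentence, invoking only Lemmas~\ref{inequal} and~\ref{poly}. Whatever the intended argument is, it does not pass through a growth lower bound on $I_{\mathcal F,M}(\underline y(t,\dots,t))$, nor through the detour via the auxiliary system $\underline z$ with its attendant error term; you should look for a way to combine those two lemmas directly rather than trying to prove a lower estimate that is not available.
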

\begin{proof} It is straightforward by Lemmas \ref{inequal} and \ref{poly}. 
\end{proof}
 
\noindent{\bf Proof of Theorem \ref{A}.} Let $\underline{x}$ and $\underline{y}$ be two good systems of parameters of $M$ with respect to $\mathcal{F}$. Denote the least degree of the polynomials bounding  above the functions $I_{\mathcal{F},M}(\underline x(\underline n))$ and $I_{\mathcal{F},M}(\underline{y}(\underline n))$ by $p_{\mathcal{F},M}(\underline{x})$ and $p_{\mathcal{F},M}(\underline{y}),$ respectively. It is enough to prove that $p_{\mathcal{F},M}(\underline{x})\ge p_{\mathcal{F},M}(\underline{y})$.  It is nothing to prove if $I_{\mathcal{F},M}(\underline{y}(\underline{n}))=0$ for all  $n_1,\ldots,n_d$. Therefore, by virtue of the non-decreasing property of the function $I_{\mathcal{F},M}(\underline{y}(\underline n))$,   we can assume in addition that $I_{\mathcal{F},M}(\underline{y}(\underline{n}))\not=0$ for all  $n_1,\ldots,n_d$. 
By Lemma \ref{noisop}, there exist $s, r_1,\ldots, r_d>0$ and a good system of parameters $\underline{z}=z_1,\ldots, z_d$ with respect to $\mathcal{F}$ such that 
$$(x_1^{r_1},\ldots,x_d^{r_d})+\mathrm{Ann}M\subseteq(z_1,x_2^{r_2}\ldots,x_d^{r_d})+\mathrm{Ann}M\subseteq\ldots\subseteq(\underline{z})+\mathrm{Ann}M$$
$$\hspace{1cm}\subseteq(z_1,\ldots,z_{d-1},y_d^s)+\mathrm{Ann}M\subseteq\ldots\subseteq(y_1^{s},\ldots,y_d^{s})+\mathrm{Ann}M.$$
where $z_1, \ldots, z_i, x_{i+1}^{r_{i+1}}, \ldots, x_d^{r_d}$ and $z_1, \ldots, z_i, y_{i+1}^s, \ldots, y_d^{s}$ are good systems of parameters of $M$ with respect to $\mathcal{F}$. Applying Corollary \ref{bdt} $(2d)$-times to the above sequence of systems of parameters, we get
$$I_{\mathcal{F},M}(y_1^{ts},\ldots,y_d^{ts})\leqslant (c)^{2d}I_{\mathcal{F},M}(x_1^{tr_1},\ldots,x_d^{tr_d}),$$ 
for all positive integers $t$,  where $c$ is a constant depending only on the systems of parameters $y_1^s, \ldots, y_d^s$,\  $x_1^{r_1}, \ldots, x_d^{r_d}$ and $\underline{z}$. For  any $d$-tuple $n_1,\ldots,n_d,$ we set $t=n_1+\ldots+n_d$. Then 
$$I_{\mathcal{F},M}(y_1^{n_1},\ldots,y_d^{n_d})\leqslant I_{\mathcal{F},M}(y_1^{st},\ldots,y_i^{st},\ldots,y_d^{st})\leqslant (c)^{2d}I_{\mathcal{F},M}(x_1^{tr_1},\ldots,x_d^{tr_d}).$$
Therefore, if  $F(n_1,\ldots,n_d)$ is a polynomial bounding above $I_{\mathcal{F},M}(x_1^{n_1},\ldots,x_d^{n_d})$, we have 
   $$I_{\mathcal{F},M}(y_1^{n_1},\ldots,y_d^{n_d})\leqslant (c)^{2d}F(r_1n_1+\ldots +r_1n_d,\ldots, r_dn_1+\ldots+ r_dn_d).$$
The last function is  a polynomial in $n_1,\ldots,n_d$ of the same degree as $F(n_1,\ldots,n_d)$. This implies that $p_{\mathcal{F},M}(\underline{x})\ge p_{\mathcal{F},M}(\underline{y})$. 
\hfill$\square$
\medskip

Since every system of parameters of $M$ is good with respect to the trivial filtration $\mathcal F_0 : 0\subset M$, we get the following immediate corollary which is one of the main results in 
\cite[Theorem 2.3]{C2}.
\begin{cor}
Let $\underline x=x_1, \ldots, x_d$ be a system of parameters of $M$ and $\underline n =n_1, \ldots, n_d$ a $d$-tuple of positive integers. Then the least degree of all polynomials in $\underline n$ bounding above the function  $I_M(\un x(\un n))=\ell(M/\un x(\un n)M)-e(\un x(\un n), M)$ is independent of the choice of $\un x$.
\end{cor}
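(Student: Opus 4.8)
The plan is to derive this corollary directly from Theorem~\ref{A} by specializing to the trivial filtration $\mathcal{F}_0 : 0 \subset M$. First I would observe that the dimension condition for $\mathcal{F}_0$ reads $\dim 0 = -\infty < \dim M = d$, which holds trivially, so $\mathcal{F}_0$ is a legitimate filtration satisfying the dimension condition. Next, for this filtration we have $t = 1$, $M_0 = 0$, $M_1 = M$, $d_0 = \dim 0 = -\infty$ and $d_1 = d$; the condition that $\underline x = x_1, \ldots, x_d$ be good with respect to $\mathcal{F}_0$ is $M_0 \cap (x_1, \ldots, x_d)M = 0$, i.e. $0 \cap (\underline x)M = 0$, which is automatic. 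Hence \emph{every} system of parameters of $M$ is a good system of parameters with respect to $\mathcal{F}_0$.

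The second step is to identify the function $I_{\mathcal{F}_0, M}(\underline x)$ with $I_M(\underline x)$. By the definition recalled in Section~3, $I_{\mathcal{F}_0,M}(\underline x) = \ell(M/\underline x M) - \sum_{i=0}^{t} e(x_1, \ldots, x_{d_i}; M_i)$; with $t = 1$ the sum has two terms, the $i=0$ term being $e(\,;M_0) = \ell(M_0) = \ell(0) = 0$ (and $M_0 = 0$ certainly has finite length), and the $i=1$ term being $e(x_1, \ldots, x_d; M) = e(\underline x; M)$. Thus $I_{\mathcal{F}_0, M}(\underline x) = \ell(M/\underline x M) - e(\underline x; M) = I_M(\underline x)$, and likewise $I_{\mathcal{F}_0, M}(\underline x(\underline n)) = \ell(M/\underline x(\underline n)M) - e(\underline x(\underline n); M) = I_M(\underline x(\underline n))$ for every $d$-tuple $\underline n$ of positive integers.

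Finally, I would apply Theorem~\ref{A} with $\mathcal{F} = \mathcal{F}_0$: it asserts that the least degree of all polynomials in $n_1, \ldots, n_d$ bounding above $I_{\mathcal{F}_0, M}(\underline x(\underline n))$ is independent of the choice of good system of parameters $\underline x$. Since every system of parameters is good with respect to $\mathcal{F}_0$ and since $I_{\mathcal{F}_0, M}(\underline x(\underline n)) = I_M(\underline x(\underline n))$, this is exactly the assertion that the least degree of all polynomials bounding above $I_M(\underline x(\underline n))$ is independent of $\underline x$, which is the claim. There is essentially no obstacle here: the corollary is a pure specialization, and the only things to check are the bookkeeping identifications above — that $\mathcal{F}_0$ satisfies the dimension condition, that goodness is vacuous, and that the invariant reduces correctly when the filtration is trivial.
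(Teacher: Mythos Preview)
Your proposal is correct and follows exactly the approach the paper indicates: the corollary is stated as an immediate consequence of Theorem~\ref{A} applied to the trivial filtration $\mathcal{F}_0:0\subset M$, using only the observation that every system of parameters is good with respect to $\mathcal{F}_0$. The additional bookkeeping you supply (checking the dimension condition, the vacuity of goodness, and the reduction $I_{\mathcal{F}_0,M}=I_M$) is accurate and simply makes explicit what the paper leaves implicit.
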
 
Recall  from \cite{CN} that an $R$-module $M$ is called a sequentially Cohen-Macaulay module (respectively, generalized Cohen-Macaulay module) if there is a filtration satisfying the dimension condition $\F: M_0\subset M_1\subset \ldots \subset M_t=M$ such that $M_0$ is of finite length and each $M_i/M_{i-1}$ is Cohen-Macaulay (generalized Cohen-Macaulay) for $i=1, \ldots, t$.  By virtue of  \cite [Theorem 4.2]{CC1} and \cite [Theorem 4.6]{CC2}, we get the following  consequence of Theorem \ref{A}. We stipulate here that the degree of the zero polynomial is $-\infty$.
\begin{cor}
$M$ is a sequentially Cohen-Macaulay module (respectively, sequentially generalized Cohen-Macaulay module) if and only if there is  a filtration $\mathcal F$ satisfying the dimension condition such that $p_{\mathcal{F}}(M)= -\infty$ (respectively, $p_{\mathcal{F}}(M)\leqslant 0$).
\end{cor}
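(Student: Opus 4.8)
The plan is to reduce the statement to the already-known characterizations of sequential Cohen-Macaulayness through the behaviour of the function $I_{\mathcal{F},M}(\underline{x}(\underline{n}))$. I would treat the sequentially Cohen-Macaulay case in detail and then remark that the sequentially generalized Cohen-Macaulay case is the verbatim analogue, with \cite[Theorem 4.6]{CC2} in place of \cite[Theorem 4.2]{CC1} and ``bounded above by a constant'' in place of ``identically zero''. Recall the conventions made just before the statement: the zero polynomial has degree $-\infty$, so $p_{\mathcal{F}}(M)=-\infty$ means exactly that $I_{\mathcal{F},M}(\underline{x}(\underline{n}))$ is bounded above by the zero polynomial, while $p_{\mathcal{F}}(M)\leqslant 0$ means it is bounded above by a polynomial of degree $\leqslant 0$, i.e.\ by a constant.

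For the forward implication, suppose $M$ is sequentially Cohen-Macaulay. By definition there is a filtration $\mathcal{F}: M_0\subset\cdots\subset M_t=M$ satisfying the dimension condition with $M_0$ of finite length and each $M_i/M_{i-1}$ Cohen-Macaulay. Choose a good system of parameters $\underline{x}$ with respect to $\mathcal{F}$; these exist by the remarks in Section 2. Then \cite[Theorem 4.2]{CC1} gives $I_{\mathcal{F},M}(\underline{x}(\underline{n}))=0$ for every $\underline{n}$, hence the zero polynomial bounds this function from above; since by Theorem \ref{A} the least degree of such a bounding polynomial is precisely $p_{\mathcal{F}}(M)$, we conclude $p_{\mathcal{F}}(M)=-\infty$.

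For the converse, suppose $\mathcal{F}$ satisfies the dimension condition and $p_{\mathcal{F}}(M)=-\infty$. Fix any good system of parameters $\underline{x}$ with respect to $\mathcal{F}$ (allowed by Theorem \ref{A}). By the definition of $p_{\mathcal{F}}(M)$ the function $I_{\mathcal{F},M}(\underline{x}(\underline{n}))$ is bounded above by the zero polynomial, and since $I_{\mathcal{F},M}(\underline{x}(\underline{n}))\geqslant 0$ by \cite[Lemma 2.7]{CC1}, it must vanish for all $\underline{n}$; in particular $I_{\mathcal{F},M}(\underline{x})=0$. Applying \cite[Theorem 4.2]{CC1} in the reverse direction, each $M_i/M_{i-1}$ is Cohen-Macaulay and $M_0$ has finite length, so $M$ is sequentially Cohen-Macaulay. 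I do not expect a genuine obstacle here: the argument is formal once Theorem \ref{A} is in hand. The only point demanding care is to invoke \cite[Theorem 4.2]{CC1} and \cite[Theorem 4.6]{CC2} in the exact form ``for a filtration satisfying the dimension condition and a good system of parameters, vanishing (resp.\ boundedness) of $I_{\mathcal{F},M}(\underline{x}(\underline{n}))$ is equivalent to all successive quotients of $\mathcal{F}$ being Cohen-Macaulay (resp.\ generalized Cohen-Macaulay) with $M_0$ of finite length'', which is precisely what ``sequentially (generalized) Cohen-Macaulay with respect to $\mathcal{F}$'' means, together with keeping the degree conventions consistent so that a sequentially Cohen-Macaulay module is in particular sequentially generalized Cohen-Macaulay, matching $-\infty\leqslant 0$.
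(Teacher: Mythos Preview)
Your proposal is correct and follows exactly the paper's own route: the corollary is stated there as an immediate consequence of Theorem \ref{A} together with \cite[Theorem 4.2]{CC1} and \cite[Theorem 4.6]{CC2}, and your write-up simply unpacks that one-line justification (including the nonnegativity from \cite[Lemma 2.7]{CC1} to turn ``bounded above by the zero polynomial'' into ``identically zero'').
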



\section{Localization}
 In the rest of this paper, we denote  by  $\mathcal{D:}\ D_0\subset D_1\subset\ldots\subset D_t=M$ the dimension filtration of $M$ with $d_i=\dim M_i$. Then, for any  $\frak{p}\in \Supp M$ there exist non negative integers $ s_l,\ldots , s_1$, $d\ge s_l>  \ldots> s_1\ge 0$, which are  determined recursively as follows. $s_l$ is the least integer such that $M_\frak{p}=(D_{s_l})_\frak{p}$ and $s_i$ is the least integer such that  $(D_{s_{i+1}-1})_\frak{p}=(D_{s_i})_\frak{p}$ for all $i=l-1,\ldots,1$. In this section we show that for each localization of a module at a prime, there are good systems of parameters of the module such that a part of it is also a good system of parameters of the localization. We first have some lemmas.
\begin{lem}\label{dimfil} With the notations above and assume that $R$ is catenary, then the filtration $\mathcal{D}_\fp :(D_{s_1})_\fp \subset (D_{s_2})_\fp \subset\ldots\subset (D_{s_l})_\fp =M_\fp $ is the dimension filtration of $M_\fp$.
\end{lem}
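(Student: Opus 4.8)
The plan is to verify the two defining properties of the dimension filtration for $\mathcal{D}_\fp$: first that the chain is strictly increasing with strictly increasing dimensions, and second that each term is the largest submodule of its successor of smaller dimension. I would begin with the strictness and dimension claims. By construction the $s_i$ are chosen so that $(D_{s_i})_\fp \subsetneq (D_{s_{i+1}})_\fp$ whenever the localizations differ, so the chain is strictly increasing by the very definition of the $s_i$. For the dimension strictness, the key observation is that $D_{s_{i+1}-1}$ is the largest submodule of $D_{s_{i+1}}$ with $\dim D_{s_{i+1}-1} < \dim D_{s_{i+1}}$, and that $(D_{s_i})_\fp = (D_{s_{i+1}-1})_\fp$. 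Using that $R$ is catenary (so that for $\fq \in \Supp(D_{s_{i+1}}/D_{s_{i+1}-1})$ contained in $\fp$ one has $\dim (R/\fq)_\fp$ controlled by $\dim R/\fq$), one gets $\dim (D_{s_{i+1}}/D_{s_{i+1}-1})_\fp < \dim (D_{s_{i+1}})_\fp$ at $\fp$ whenever $\fp$ is not in the support of the top-dimensional part — but this needs care, so see below. The upshot should be $\dim (D_{s_i})_\fp < \dim (D_{s_{i+1}})_\fp$.

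The heart of the argument is the maximality condition. Fix $i$ and let $N$ be the largest submodule of $(D_{s_{i+1}})_\fp$ with $\dim N < \dim (D_{s_{i+1}})_\fp$; I must show $N = (D_{s_i})_\fp$. The inclusion $(D_{s_i})_\fp \subseteq N$ follows once I know $\dim (D_{s_i})_\fp < \dim (D_{s_{i+1}})_\fp$, which is the dimension strictness just discussed. For the reverse inclusion, I would use the primary-decomposition description of the dimension filtration from Remark \ref{23}: writing $0 = \bigcap_{\fq \in \Ass M} N(\fq)$ as a reduced primary decomposition, we have $D_j = \bigcap_{\dim R/\fq \ge \dim D_{j+1}} N(\fq)$. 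Localizing at $\fp$ commutes with finite intersections and kills exactly those $N(\fq)$ with $\fq \not\subseteq \fp$; the surviving primes $\fq \subseteq \fp$ reorganize, via catenaryness, into a primary decomposition of $0$ in $M_\fp$ stratified by $\dim (R/\fq)_\fp$. Matching the index $s_{i+1}-1$ of "the largest $D_j$ properly inside $D_{s_{i+1}}$" with the corresponding dimension cutoff in $M_\fp$ then identifies $(D_{s_i})_\fp = (D_{s_{i+1}-1})_\fp$ with the intersection of all $N(\fq)_\fp$ over $\fq \subseteq \fp$ with $\dim (R/\fq)_\fp \ge \dim (D_{s_{i+1}})_\fp$, which is precisely $N$. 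This also simultaneously re-proves the dimension strictness and the fact that every associated-prime dimension of $M_\fp$ occurs among the $\dim (D_{s_i})_\fp$, so that no index is skipped.

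I expect the main obstacle to be the bookkeeping that translates the cutoff indices $s_i$ — defined purely in terms of which localized submodules coincide — into the dimension cutoffs $\dim(R/\fq)_\fp$ governing the primary decomposition of $0$ in $M_\fp$, and verifying that $s_l$ (defined by $M_\fp = (D_{s_l})_\fp$) correctly produces $M_\fp$ itself as the top term. This is where catenaryness is genuinely used: it guarantees that for $\fq \subseteq \fp$, the number $\dim (R/\fq)_\fp + \dim (R/\fp)$ equals $\dim R/\fq$ modulo the usual caveats, so that the ordering of primes by $\dim R/\fq$ restricted to those inside $\fp$ agrees with the ordering by $\dim (R/\fq)_\fp$; without it the strata of the localized decomposition could interleave and the $s_i$ would not line up with a single dimension filtration. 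A secondary technical point is checking the edge case $s_1 = 0$ versus $s_1 > 0$, i.e. whether $D_0 = H^0_\fm(M)$ contributes; after localization the bottom term should be $H^0_{\fp R_\fp}(M_\fp)$, and one must confirm this matches $(D_{s_1})_\fp$, which again follows from the primary-decomposition picture once one notes that the $\fp$-relevant $N(\fq)$ with $\fq = \fp$ (if $\fp \in \Ass M$) produces exactly the finite-length part of $M_\fp$. Once these identifications are in place, both axioms of the dimension filtration hold for $\mathcal{D}_\fp$ and the lemma follows.
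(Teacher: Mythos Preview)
Your plan would work, but it takes a more hands-on route than the paper. You propose to localize the entire reduced primary decomposition $0=\bigcap_{\fq\in\Ass M}N(\fq)$, track which components survive at $\fp$, and then match the resulting strata (ordered by $\dim R_\fp/\fq R_\fp$) with the indices $s_i$. This is correct but involves exactly the bookkeeping you yourself flag as the main obstacle.

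The paper sidesteps that bookkeeping with a single observation: by Remark~\ref{23}, each successive quotient $D_j/D_{j-1}$ satisfies $\Ass(D_j/D_{j-1})=\Assh(D_j/D_{j-1})$, i.e.\ is equidimensional. Since $(D_{s_{i-1}})_\fp=(D_{s_i-1})_\fp$ by the very definition of $s_{i-1}$, one has $(D_{s_i})_\fp/(D_{s_{i-1}})_\fp=(D_{s_i}/D_{s_i-1})_\fp$, and catenaryness forces every associated prime of this localized quotient to have the same dimension $d_{s_i}-\dim R/\fp$. From this one reads off directly both the strict inequality
\[
\dim(D_{s_{i-1}})_\fp\le d_{s_{i-1}}-\dim R/\fp<d_{s_i}-\dim R/\fp=\dim(D_{s_i}/D_{s_{i-1}})_\fp\le\dim(D_{s_i})_\fp
\]
and, via Remark~\ref{23} once more, the maximality of $(D_{s_{i-1}})_\fp$ (the relevant characterization being that $N$ is the largest submodule of $L$ with $\dim N<\dim L$ iff $\Ass(L/N)=\Assh(L)$). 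Your approach buys explicitness at the cost of the index-matching you worry about; the paper's buys brevity by never touching individual primary components, only the equidimensionality of successive quotients. Incidentally, your tentative line about ``$\dim(D_{s_{i+1}}/D_{s_{i+1}-1})_\fp<\dim(D_{s_{i+1}})_\fp$'' has the inequality the wrong way round --- that quotient actually realizes the full dimension of $(D_{s_{i+1}})_\fp$, and it is $\dim(D_{s_i})_\fp$ that is strictly smaller --- but you already sensed something was off there.
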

\begin{proof}
We need only to show that $(D_{s_{i-1}})_\fp$ is the biggest submodule of $(D_{s_i})_\fp$ with $\dim (D_{s_{i-1}})_\fp<\dim (D_{s_i})_\fp$ for the case $i=l$, the other cases are proved similarly. For an $R$-module $N$, denote $\Assh(N)=\{\fq\in \Ass(N): \dim R/\fq=\dim N\}$. Then it follows by Remark \ref{23}(i)  that $\Ass (D_{s_l}/D_{s_l-1})=\Assh (D_{s_l}/D_{s_l-1})$. Therefore,  by the choice of $s_{l-1}$ and the assumption $R$ is catenary, we get
 $\dim (D_{s_l}/D_{s_{l-1}})_\fp=\dim (D_{s_l}/D_{s_l-1})_\fp=d_{s_l}-\dim R/\fp$.  This implies that 
$$\dim (D_{s_{l-1}})_\fp\leqslant d_{s_{l-1}}-\dim R/\fp<\dim (D_{s_l}/D_{s_{l-1}})_\fp\leqslant \dim(D_{s_l})_\fp.$$ 
So $(D_{s_{l-1}})_\fp$ is the biggest submodule of $(D_{s_l})_\fp$ with $\dim (D_{s_{l-1}})_\fp<\dim (D_{s_l})_\fp$ by Remark \ref{23}(i) again.
\end{proof}

From now on, for a $\fp \in \Supp (M)$ we  always denote the dimension filtration of $M_\fp$ by $\mathcal{D}_\fp :(D_{s_1})_\fp \subset (D_{s_2})_\fp \subset\ldots\subset (D_{s_l})_\fp =M_\fp $, where $ s_1,\ldots , s_l$ are determined as in the beginning of this section. Belows are some basic properties of this  filtration.   
\begin{lem}\label{dfpro}
Let  $\underline{x}=x_1,\ldots,x_d$ be a good system of parameters  of $M$ and $\fp \in \mathrm{Supp}M$, $\fp \not=\frak{m}$. Assume that $R$ is catenary. Then
\item(i) $(D_j)_\fp =(D_{s_i})_\fp,$ for all $s_i\leqslant j<s_{i+1}$, $ i=1,\ldots,l.$
\item(ii) $(x_{d_{s_l}+1}, \ldots, x_d)\subseteq \mathrm{Ann}_{R_\fp}M_\fp$.
\item(iii) $\Assh(D_{s_i})_\fp =\{\fq R_\fp\: :\:\fq \in \Assh  D_{s_i},\: \fq \subseteq\fp \}$ and $$\dim D_{s_i}=\dim(D_{s_i})_\fp +\dim R/\fp ,$$ for $ i=1,\ldots,l$. 
\end{lem}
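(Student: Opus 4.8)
The plan is to prove the three assertions of Lemma~\ref{dfpro} in order, using Lemma~\ref{dimfil} and the structure of the indices $s_1,\ldots,s_l$ as the main input. Throughout one keeps in mind the defining property of the $s_i$: by construction $(D_{s_{i+1}-1})_\fp=(D_{s_i})_\fp$, and $s_l$ is least with $M_\fp=(D_{s_l})_\fp$.

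For part (i), I would argue that localization at $\fp$ preserves inclusions, so for $s_i\le j<s_{i+1}$ we have $(D_{s_i})_\fp\subseteq (D_j)_\fp\subseteq (D_{s_{i+1}-1})_\fp$; since the two ends coincide by the recursive definition of $s_i$, the middle term is squeezed and equals $(D_{s_i})_\fp$. This is essentially immediate from the definitions and needs no catenary hypothesis, but I would keep the hypothesis in the statement for uniformity.

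For part (ii), the point is that $x_{d_{s_l}+1},\ldots,x_d$ annihilate $M/N_{s_l}$ (this is how a good system of parameters is produced in Remark~2.4, and more generally the relation between a good s.o.p.\ and the filtration gives $(x_{d_k+1},\ldots,x_d)\subseteq\sqrt{\Ann M/N_k}$ for each $k$). Since $s_l$ is least with $M_\fp=(D_{s_l})_\fp$, the submodule $N_{s_l}$ dies after localizing at $\fp$ (equivalently $M_\fp=(D_{s_l})_\fp$ means the primary components of $M$ not lying in $\fp$ localize to zero), so $(M/N_{s_l})_\fp=M_\fp$ up to a submodule already killed; more precisely $N_{s_l}\otimes R_\fp=0$ forces $(x_{d_{s_l}+1},\ldots,x_d)M_\fp=0$, i.e.\ $(x_{d_{s_l}+1},\ldots,x_d)\subseteq\Ann_{R_\fp}M_\fp$. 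I would need to be a little careful stating precisely why $M_\fp=(D_{s_l})_\fp$ implies $(N_{s_l})_\fp=0$: this comes from $D_{s_l}\cap N_{s_l}=0$ together with the fact that the complementary primes all meet $\fp$ trivially in the relevant sense, but since $x_j\in\sqrt{\Ann(M/N_{s_l})}$ it suffices that every associated prime of $M/N_{s_l}$ fails to be contained in $\fp$, which is exactly the meaning of $s_l$ being the least index with $M_\fp=(D_{s_l})_\fp$.

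For part (iii), I would combine Lemma~\ref{dimfil} with the catenary hypothesis as in the proof of that lemma. First, $\Ass(D_{s_i}/D_{s_i-1})=\Assh(D_{s_i}/D_{s_i-1})$ by Remark~\ref{23}, and by the choice of $s_{i-1}$ one has $(D_{s_i-1})_\fp=(D_{s_{i-1}})_\fp$, so $\dim(D_{s_i}/D_{s_{i-1}})_\fp=\dim(D_{s_i}/D_{s_i-1})_\fp$. A prime $\fq\in\Assh D_{s_i}=\Ass(D_{s_i}/D_{s_i-1})$ contributes to $\Ass(D_{s_i})_\fp$ precisely when $\fq\subseteq\fp$, and then catenarity gives $\dim R_\fp/\fq R_\fp=\dim R/\fq-\dim R/\fp=d_{s_i}-\dim R/\fp$; this pins down both $\Assh(D_{s_i})_\fp$ and the dimension formula $\dim D_{s_i}=\dim(D_{s_i})_\fp+\dim R/\fp$, using that at least one $\fq\in\Assh D_{s_i}$ lies in $\fp$ since $(D_{s_i})_\fp$ jumps in dimension at step $i$ in the filtration $\mathcal D_\fp$. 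The main obstacle I anticipate is bookkeeping: making sure the "least index" characterizations of the $s_i$ are invoked correctly so that no stray primary component survives localization in (ii), and that in (iii) one genuinely has an $\Assh$ prime of $D_{s_i}$ inside $\fp$ — this is where Lemma~\ref{dimfil}, which says $\mathcal D_\fp$ is the dimension filtration and hence has strictly increasing dimensions, does the work.
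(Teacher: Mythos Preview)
Your arguments for parts (i) and (iii) are correct and essentially coincide with the paper's: (i) is the squeeze coming straight from the recursive definition of the $s_i$, and (iii) is exactly the reduction to $\Ass(D_{s_i}/D_{s_i-1})$ via Lemma~\ref{dimfil} followed by the catenary dimension count, which is what the paper does.

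Part (ii), however, has a genuine gap. Your route through $N_{s_l}$ and $\sqrt{\Ann(M/N_{s_l})}$ only gives, for an arbitrary good system of parameters, that some \emph{power} $x_j^{n}$ lies in $\Ann(M/N_{s_l})$; after localizing and using $(N_{s_l})_\fp=0$ you therefore obtain only $x_j^{n}M_\fp=0$, not $x_jM_\fp=0$. Your proposed rescue --- that every associated prime of $M/N_{s_l}$ fails to lie in $\fp$ --- is false: the primes $\fq\in\Ass M$ with $\dim R/\fq=d_{s_l}$ belong to $\Ass(M/N_{s_l})$, and the minimality of $s_l$ (i.e.\ $(D_{s_l-1})_\fp\neq M_\fp$, hence $(D_{s_l}/D_{s_l-1})_\fp\neq 0$) forces at least one such $\fq$ to be contained in $\fp$. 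So that line of argument cannot close the gap.

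The paper bypasses all of this by using the defining intersection condition of a good system of parameters directly: from $D_{s_l}\cap(x_{d_{s_l}+1},\ldots,x_d)M=0$ one localizes to get
\[
M_\fp\cap(x_{d_{s_l}+1},\ldots,x_d)M_\fp=(D_{s_l})_\fp\cap(x_{d_{s_l}+1},\ldots,x_d)M_\fp=0,
\]
using $(D_{s_l})_\fp=M_\fp$; this immediately gives $(x_{d_{s_l}+1},\ldots,x_d)M_\fp=0$. Equivalently, the good-s.o.p.\ condition already gives $x_j\in\Ann D_{s_l}$ (not merely its radical) for $j>d_{s_l}$, and then $x_jM_\fp=x_j(D_{s_l})_\fp=0$. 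Replacing your $N_{s_l}$-detour by this one-line observation fixes (ii).
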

\begin{proof} ($i$)  is a consequence of the definition of the dimension filtration $\mathcal{D}_\fp$.
\item($ii$)  Since $\underline x$ is a good system of parameters, we have $D_{s_l}\cap(x_{d_{s_l}+1},\ldots,x_d)M=0$. This implies that 
$$M_\fp\cap(x_{d_{s_l}+1},\ldots,x_d)M_\fp=(D_{s_l})_\fp\cap(x_{d_{s_l}+1},\ldots,x_d)M_\fp=0.$$
Hence, $(x_{d_{s_l}+1},\ldots,x_d)\subseteq \mathrm{Ann}_{R_\fp}M_\fp$.
\item($iii$)  Since $\mathcal{D}_\fp$ is the dimension filtration of $M_\fp$ by Lemma \ref{dimfil}, we get $$\Assh(D_{s_i})_\fp=\Ass (D_{s_i}/D_{s_{i-1}})_\fp=\Ass (D_{s_i}/D_{s_i-1})_\fp.$$ Then the assertions can be shown as in the proof of Lemma \ref{dimfil}.
\end{proof}

\begin{pro}\label{localdf}
Let $R$ be a catenary ring and $\fp \in\mathrm{Supp}M$. There exists a good system  of parameters $\underline{x}=x_1,\ldots,x_d$ of $M$ such that $x_{r+1},\ldots,x_s$ is a good system of parameters of $M_\fp $, where   $r=\dim R/\fp $ and $s=\dim M_\fp+\dim R/\fp.$
\end{pro}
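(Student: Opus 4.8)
The plan is to construct the required system of parameters of $M$ by hand, choosing its members in the order $x_d,x_{d-1},\ldots,x_1$ by repeated prime avoidance --- exactly the mechanism used in Section~2 to produce good systems of parameters --- but imposing on the block $x_{r+1},\ldots,x_s$ the additional demand that it also cut $M_\fp$ down to dimension zero. Let $N_0,\ldots,N_t$ be the submodules of $M$ from the construction recalled in Section~2, so that $D_i\cap N_i=0$; from their definition $\dim\Supp(M/N_i)\le d_i$. For an index $k\in\{1,\ldots,d\}$ let $a(k)$ be the largest $i$ with $d_i<k$ (it exists since $d_0\le 0<k$), so that $\dim\Supp(M/N_{a(k)})\le d_{a(k)}<k$.

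I would choose $x_d,\ldots,x_1$ recursively. Given $x_{k+1},\ldots,x_d$, pick $x_k$ inside the ideal $\Ann(M/N_{a(k)})\cap\fm$ --- intersected in addition with $\fp$ when $r<k\le s$ --- but outside every prime $\fp'$ on the following finite list: the primes minimal over $(x_{k+1},\ldots,x_d)+\Ann M$ with $\dim R/\fp'=k$; and, when $r<k\le s$, the primes $\fp'\subseteq\fp$ minimal over $(x_{k+1},\ldots,x_s)+\Ann M$ with $\dim R_\fp/\fp'R_\fp=k-r$. Such $x_k$ exists by prime avoidance: every prime $\fp'$ on the list satisfies $\dim R/\fp'=k$ --- for the second kind because $R$ is catenary and local, whence $\dim R/\fp'=\dim R_\fp/\fp'R_\fp+\dim R/\fp=(k-r)+r=k$ --- while none of $\Ann(M/N_{a(k)})$ (supported in dimension $<k$), $\fm$ (with $\dim R/\fm=0<k$) and $\fp$ (with $\dim R/\fp=r<k$) can be contained in such a prime, hence neither can their intersection.

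For the resulting $\un x=x_1,\ldots,x_d$: the conditions $x_k\in\Ann(M/N_{a(k)})$ give $(x_{d_i+1},\ldots,x_d)M\subseteq N_i$ and hence $D_i\cap(x_{d_i+1},\ldots,x_d)M=0$, so $\un x$ is a good system of parameters of $M$ with respect to $\mathcal{D}$; it is an honest system of parameters because $\un x\subseteq\fm$ forces $M/\un xM\ne 0$ while the first avoidance forces $\dim M/(x_k,\ldots,x_d)M\le k-1$ for all $k$ by descending induction. The second avoidance, together with $x_{r+1},\ldots,x_s\in\fp$, forces $\dim M_\fp/(x_{r+1},\ldots,x_s)M_\fp=0$ by the same induction, so $x_{r+1},\ldots,x_s$ is a system of parameters of $M_\fp$ (of the correct length $s-r=\dim M_\fp$). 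Finally it is good for the dimension filtration $(D_{s_1})_\fp\subset\ldots\subset(D_{s_l})_\fp=M_\fp$ of $M_\fp$ (Lemma~\ref{dimfil}): by Lemma~\ref{dfpro}(iii), $\dim(D_{s_i})_\fp=d_{s_i}-r$, and localizing $D_{s_i}\cap(x_{d_{s_i}+1},\ldots,x_d)M=0$ and intersecting with the smaller ideal $(x_{d_{s_i}+1},\ldots,x_s)$ gives $(D_{s_i})_\fp\cap(x_{d_{s_i}+1},\ldots,x_s)M_\fp=0$, which is exactly the goodness condition, since $x_{d_{s_i}+1},\ldots,x_s$ is the tail of $x_{r+1},\ldots,x_s$ beyond position $\dim(D_{s_i})_\fp$.

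The main obstacle is the joint prime-avoidance step for the indices $r<k\le s$: one must check that the primes imposed by the localization requirement $\dim M_\fp/(x_{r+1},\ldots,x_s)M_\fp=0$ have the same coheight $k$ in $R$ as the primes imposed by the requirement that $\un x$ be a parameter system of $M$ --- this is where catenariness of $R$ is needed --- and that the annihilator $\Ann(M/N_{a(k)})$ dictated by goodness lies in no prime of coheight $k$, so that a single $x_k$ can meet every requirement simultaneously. The remaining verifications are routine, relying only on the standard fact that a sequence dodging the top-dimensional minimal primes at each stage is a system of parameters.
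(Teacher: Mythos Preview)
Your argument is correct, but it does more work than necessary. Both you and the paper build $\underline{x}$ by prime avoidance so that $x_{d_i+1},\ldots,x_d\in\Ann(M/N_i)$ for every $i$, guaranteeing goodness for $M$, and both pass the goodness condition to $M_\fp$ by localizing $D_{s_i}\cap(x_{d_{s_i}+1},\ldots,x_d)M=0$. The difference is in how the parameter property for $M_\fp$ is obtained.

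You impose a \emph{second} family of avoidance constraints on the block $r<k\le s$, forcing the dimension of $M_\fp/(x_k,\ldots,x_s)M_\fp$ to drop at each step; this requires the catenary identity $\dim R/\fp'=\operatorname{ht}(\fp/\fp')+\dim R/\fp$ to align coheights in $R$ and in $R_\fp$, which you check correctly. The paper avoids this entirely: it simply asks that $x_{r+1},\ldots,x_d\in\fp$ (one extra ideal in the same prime avoidance), and then observes that the parameter property for $M_\fp$ is \emph{automatic}. Indeed, $\dim M/(x_{r+1},\ldots,x_d)M\le r=\dim R/\fp$, so $\fp$ is necessarily minimal in the support of this quotient and $M_\fp/(x_{r+1},\ldots,x_d)M_\fp$ has finite length; since Lemma~\ref{dfpro}(ii) gives $(x_{s+1},\ldots,x_d)M_\fp=0$ and $s=d_{s_l}$, the subsequence $x_{r+1},\ldots,x_s$ already has the right length and kills $M_\fp$ to finite length. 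Thus your second avoidance condition, and with it the delicate coheight-matching via catenarity, can be dropped: catenarity is used only through Lemmas~\ref{dimfil} and~\ref{dfpro}, not in the construction of $\underline{x}$ itself.
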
   
\begin{proof}
Put $d_i=\dim D_i$. By Remark \ref{23}(i), $D_i=\bigcap_{\dim(R/\fp)\geq d_{i+1}}N(\fp)$ where $0=\bigcap_{\fp\in \Ass M}N(\fp)$ is a reduced primary decomposition in $M$. Put $N_i=\bigcap\limits_{\dim R/\fp\leqslant d_i}N(\fp)$. Then $D_i\cap N_i=0$ and $\dim (M/N_i)=d_i$. Since $r=\dim R/\fp $, by the Prime Avoidance Theorem there exists a system of parameters $\underline x=x_1,\ldots,x_d$ such that $x_{d_i+1},\ldots,x_d \in\mathrm{Ann}(M/N_i)$ for all $i=0, 1, \ldots, t$ and $(x_{r+1}, \ldots, x_d)\subseteq \fp$. We show that  $\underline x$ is the required system of parameters. In fact, since $(x_{d_i+1},\ldots,x_d)M\cap D_i\subseteq N_i\cap D_i=0$,  $\underline x$ is a good system of parameters of $M$. By Lemma \ref{dimfil}, the dimension filtration of $M_\fp$ is of the form
$$\mathcal{D}_\fp :(D_{s_1})_\fp \subset (D_{s_2})_\fp \subset\ldots\subset (D_{s_l})_\fp =M_\fp.$$
Since $r=\dim R/\fp$ and $(x_{r+1}, \ldots, x_d)\subseteq \fp$, the $R_\fp$-module $M_\fp/(x_{r+1}, \ldots, x_d)M_\fp$ is of finite length. On the other hand, by Lemma \ref{dfpro} we get $(x_{s+1},\ldots, x_d)M_\fp=0$ and  $s=\dim D_{s_l}$. Therefore $x_{r+1}, \ldots, x_s$ is a system of parameters of $M_\fp$. Since $\underline x$ is a good system of parameters, $(x_{d_{s_j}+1},\ldots,x_d)M\cap D_{s_j}=0$ for any $j=1, 2, \ldots, l$. Hence
$$(x_{d_{s_j}+1},\ldots,x_d)M_\fp\cap (D_{s_j})_\fp=0.$$
Thus  $x_{r+1}, \ldots, x_{d_{s_l}}$ is a good system of parameters of $M_\fp$ as required.
\end{proof}


\section{Proof of Theorem \ref{B}}
In \cite {C2} the first author had showed that the least degree of all polynomials in $n_1,\ldots , n_d$  bounding above the difference $\ell(M/\underline x(\underline n)M)-e(\underline x(\underline n);M)$ is independent of the choices of the system of parameters $\underline x$ and he denoted this invariant by $p(M)$ and  called it the polynomial type of $M$. In our circumstances, this polynomial type is just the invariant  $p_{\mathcal F_0}(M)$, where $\mathcal F_0: 0\subset M$ is 
 the trivial filtration of $M$. When $R$ is a quotient of a Cohen-Macaulay ring, it is well-known that the non-Cohen-Macaulay locus $\mathrm{nCM}(M)=\{\fp\in \Supp M : M_\fp \text{ is not Cohen-Macaulay}\}$ is closed. Then by Corollary 4.2 of \cite{C2}, we have $p(M)=\dim \mathrm{nCM}(M)$ provided $M$ is equidimensional.  To prove Theorem \ref{B} we need some auxiliary  lemmas.
\begin{lem}\label{nCM}
Let $\D: D_0\subset D_1\subset\ldots\subset D_t=M$ be the dimension filtration of $M$. Denote $V_M=\{\fp\in \Supp M: M_\fp$ is not sequentially Cohen-Macaulay$\}$. Then 
\item(i) $V_M=\bigcup\limits_{i=1}^t\mathrm{nCM}(D_i/D_{i-1}).$
\item(ii) If $R$ is a quotient of a Cohen-Macaulay ring then $V_M$ is closed.
\end{lem}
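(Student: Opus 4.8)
For part (i), the key is the characterization of sequentially Cohen-Macaulay modules via the dimension filtration: $M_\fp$ is sequentially Cohen-Macaulay if and only if the dimension filtration of $M_\fp$ has Cohen-Macaulay quotients. The plan is to use Lemma~\ref{dimfil}, which tells us that the dimension filtration of $M_\fp$ is $(D_{s_1})_\fp\subset\ldots\subset(D_{s_l})_\fp=M_\fp$ for the indices $s_1<\ldots<s_l$ determined at the start of Section~4, together with Lemma~\ref{dfpro}(i), which says $(D_j)_\fp=(D_{s_i})_\fp$ whenever $s_i\le j<s_{i+1}$. Consequently each successive quotient $(D_{s_i})_\fp/(D_{s_{i-1}})_\fp$ equals $(D_{s_i})_\fp/(D_{s_i-1})_\fp$, and moreover the telescoping filtration $(D_{s_{i-1}})_\fp=(D_{s_{i-1}})_\fp\subseteq(D_{s_{i-1}+1})_\fp\subseteq\ldots\subseteq(D_{s_i})_\fp$ collapses except for the last step. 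So $M_\fp$ is sequentially Cohen-Macaulay iff each $(D_{s_i}/D_{s_i-1})_\fp$ is Cohen-Macaulay. Since $R$ is catenary (it is a quotient of a Cohen-Macaulay ring for part (ii), and for part (i) we should assume catenary so that Lemma~\ref{dimfil} applies), the localization $(D_i/D_{i-1})_\fp$ is Cohen-Macaulay iff $(D_i/D_{i-1})_\fp$ is either zero or Cohen-Macaulay of the expected dimension; chasing through the identifications, $\fp\in V_M$ iff some $(D_i/D_{i-1})_\fp$ fails to be Cohen-Macaulay, i.e.\ $\fp\in\bigcup_{i=1}^t\mathrm{nCM}(D_i/D_{i-1})$. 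One subtlety to handle carefully: when $\fp$ does not contain the support of $D_i/D_{i-1}$, the localization vanishes and is vacuously Cohen-Macaulay, so it contributes nothing to $\mathrm{nCM}(D_i/D_{i-1})$ — this is exactly why only the indices $s_1,\ldots,s_l$ survive, and the bookkeeping matches up.

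For part (ii), once (i) is established it is a short step: a finite union of closed sets is closed, so it suffices to show each $\mathrm{nCM}(D_i/D_{i-1})$ is closed in $\Supp M$. This is the standard fact that the non-Cohen-Macaulay locus of a finitely generated module over a ring that is a quotient of a Cohen-Macaulay ring is closed — I would cite this (it follows, e.g., from the openness of the Cohen-Macaulay locus for such rings, which in turn rests on excellence / the existence of a dualizing complex, or can be proved directly via the behavior of local cohomology modules and their annihilators). Then $V_M=\bigcup_{i=1}^t\mathrm{nCM}(D_i/D_{i-1})$ is a finite union of closed subsets of $\Supp M$, hence closed.

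**Main obstacle.** The only real work is part (i), and within it the delicate point is verifying that the \emph{collapsed} filtration $(D_{s_1})_\fp\subset\ldots\subset(D_{s_l})_\fp$ being the dimension filtration of $M_\fp$ (Lemma~\ref{dimfil}) genuinely captures sequential Cohen-Macaulayness of $M_\fp$ with quotients matching $(D_i/D_{i-1})_\fp$ for the surviving indices — i.e., that no information is lost in passing between ``each $(D_i/D_{i-1})_\fp$ is Cohen-Macaulay'' and ``each quotient of the dimension filtration of $M_\fp$ is Cohen-Macaulay.'' This requires knowing that sequential Cohen-Macaulayness is equivalent to the Cohen-Macaulayness of the dimension-filtration quotients (a result of the first-named author and Nhan, cited as \cite{CN}), and then a clean dimension count using catenarity (via Lemma~\ref{dfpro}(iii), $\dim D_{s_i}=\dim(D_{s_i})_\fp+\dim R/\fp$) to see that the dimensions of the $(D_{s_i}/D_{s_{i-1}})_\fp$ are strictly decreasing, so they really do form the dimension filtration. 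I expect the argument to be two short paragraphs once these ingredients are lined up.
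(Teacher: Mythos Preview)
Your proposal is correct and follows exactly the paper's approach: the paper's proof of (i) is the single line ``straightforward from Lemma~\ref{dimfil}'', and for (ii) it simply notes that each $\mathrm{nCM}(D_i/D_{i-1})$ is closed when $R$ is a quotient of a Cohen--Macaulay ring, so the finite union $V_M$ is closed. You have merely (and correctly) unpacked the details that the paper leaves implicit, including the observation that Lemma~\ref{dimfil} requires catenarity, so part~(i) tacitly assumes this.
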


\begin{proof} ($i$)  is straightforward from Lemma \ref{dimfil}.\\
($ii$) If  $R$ is a quotient of a Cohen-Macaulay ring, $\mathrm{nCM}(D_i/D_{i-1})$ is closed for all $i=1,\ldots, t$. Hence $V_M$ is closed. 
\end{proof}
\begin{lem}\label{1}
Assume that $R$ is a quotient of a Cohen-Macaulay ring. Then
$$\dim V_M=\max\{p(D_i/D_{i-1}):i=1,\ldots,t\}.$$
\end{lem}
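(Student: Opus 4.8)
The plan is to establish the equality $\dim V_M = \max\{p(D_i/D_{i-1}) : i=1,\ldots,t\}$ by combining the decomposition $V_M = \bigcup_{i=1}^t \mathrm{nCM}(D_i/D_{i-1})$ from Lemma \ref{nCM}(i) with the known fact (Corollary 4.2 of \cite{C2}, quoted above) that $p(N) = \dim \mathrm{nCM}(N)$ for an equidimensional module $N$ over a quotient of a Cohen-Macaulay ring. First I would observe that each quotient $D_i/D_{i-1}$ is equidimensional: by Remark \ref{23}(i), $\Ass(D_i/D_{i-1}) = \Assh(D_i/D_{i-1})$, so all associated primes have the same dimension and the module is equidimensional. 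Hence the hypothesis of Corollary 4.2 of \cite{C2} is met for each $i$, giving $\dim \mathrm{nCM}(D_i/D_{i-1}) = p(D_i/D_{i-1})$ for $i=1,\ldots,t$.

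Next I would take dimensions in the identity of Lemma \ref{nCM}(i). Since the dimension of a finite union of sets equals the maximum of the dimensions of the members, we get
$$\dim V_M = \dim\left(\bigcup_{i=1}^t \mathrm{nCM}(D_i/D_{i-1})\right) = \max_{1\leqslant i\leqslant t}\dim \mathrm{nCM}(D_i/D_{i-1}) = \max_{1\leqslant i\leqslant t} p(D_i/D_{i-1}),$$
which is exactly the claimed equality. One small point to handle is the degenerate case $D_0 = H^0_\fm(M)$: here $D_0$ has finite length, is trivially sequentially Cohen-Macaulay, so $\mathrm{nCM}(D_0)$ is empty and contributes nothing; the index $i=0$ is correctly excluded from the maximum, and if $D_0 \neq D_1$ then $D_1/D_0$ still has all the relevant behavior. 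Likewise if some $\mathrm{nCM}(D_i/D_{i-1})$ is empty the corresponding $p(D_i/D_{i-1}) = -\infty$ and it does not affect the maximum.

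The only genuine work is verifying that the cited Corollary 4.2 of \cite{C2} applies, i.e.\ checking the equidimensionality of $D_i/D_{i-1}$; everything else is a formal manipulation of dimensions of closed subsets of $\Spec R$ together with Lemma \ref{nCM}. I do not anticipate a serious obstacle: the main subtlety is simply making sure the edge cases (empty loci, the finite-length piece $D_0$) are dealt with cleanly so that the ranges of the maxima match, and recalling that $\dim$ of a finite union is the max of the dimensions, which holds because $\dim$ here refers to Krull dimension of closed subsets of a Noetherian space.
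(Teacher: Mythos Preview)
Your proof is correct and follows essentially the same route as the paper: equidimensionality of each $D_i/D_{i-1}$ is used to invoke \cite[Corollary 4.2]{C2}, giving $\dim\mathrm{nCM}(D_i/D_{i-1})=p(D_i/D_{i-1})$, and then Lemma~\ref{nCM}(i) yields $\dim V_M$ as the maximum of these. Your write-up in fact supplies a bit more detail on the equidimensionality and the degenerate cases than the paper does, but the argument is the same.
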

\begin{proof}
Since $D_i/D_{i-1}$ is equidimensional and $R$ is a quotient of a Cohen-\linebreak Macaulay ring, $\dim \mathrm{nCM}(D_i/D_{i-1})=p(D_i/D_{i-1})$ by \cite[Corollary 4.2]{C2}. So by Lemma \ref{nCM}, we have 
\[\begin{aligned}\dim V_M&=\max\{\dim \mathrm{nCM}(D_i/D_{i-1}):\forall i=1,\ldots,t\}\\
&=\max\{p(D_i/D_{i-1}):i=1,\ldots,t\}.
\end{aligned}\]
\end{proof}

\begin{lem} \label{2}
 Assume that $R$ is a quotient of a Cohen-Macaulay ring. Then $$p_\D(M)\leqslant \max\{p(D_i/D_{i-1}):i=1,\ldots,t\}.$$
\end{lem}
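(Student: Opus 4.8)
The plan is to bound $I_{\mathcal{D},M}(\underline{x}(\underline{n}))$ by comparing it, term by term along the dimension filtration, with the corresponding $I$-invariants of the quotients $D_i/D_{i-1}$, each of which has a known polynomial type $p(D_i/D_{i-1})$ realized through the ordinary difference $\ell(-/\underline{z}(\underline{m})(-))-e(\underline{z}(\underline{m});-)$. First I would fix a good system of parameters $\underline{x}=x_1,\ldots,x_d$ of $M$ with respect to $\mathcal{D}$ and observe that, since $\underline{x}$ is good, $x_1,\ldots,x_{d_i}$ is a system of parameters of $D_i$ and, crucially, the images of $x_1,\ldots,x_{d_i}$ form a system of parameters of $D_i/D_{i-1}$ with $D_{i-1}\cap(x_{d_{i-1}+1},\ldots,x_d)M=0$. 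The additivity of length and of multiplicity along the short exact sequences $0\to D_{i-1}\to D_i\to D_i/D_{i-1}\to 0$ should let me rewrite $I_{\mathcal{D},M}(\underline{x}(\underline{n}))$ as a sum $\sum_{i=1}^t$ of terms each controlled by $\ell\big((D_i/D_{i-1})/(x_1^{n_1},\ldots,x_{d_i}^{n_{d_i}})(D_i/D_{i-1})\big)-n_1\cdots n_{d_i}\,e(x_1,\ldots,x_{d_i};D_i/D_{i-1})$, possibly with correction terms of strictly smaller degree coming from the fact that $x_1,\ldots,x_{d_i}$ need not be part of a system of parameters realizing $p(D_i/D_{i-1})$ optimally.

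The key step is then the estimate: for each $i$, the function $\ell\big((D_i/D_{i-1})/\underline{x}(\underline{n})(D_i/D_{i-1})\big)-n_1\cdots n_{d_i}\,e(x_1,\ldots,x_{d_i};D_i/D_{i-1})$ is bounded above by a polynomial in $n_1,\ldots,n_{d_i}$ of degree $p(D_i/D_{i-1})$; this is exactly the defining property of the polynomial type as recalled in the opening of this section and proved in \cite{C2} (the invariance there means the degree bound holds for the specific parameters $x_1,\ldots,x_{d_i}$, not just a clever choice). Summing over $i=1,\ldots,t$, each summand contributes a polynomial of degree at most $\max\{p(D_i/D_{i-1}):i=1,\ldots,t\}$, so $I_{\mathcal{D},M}(\underline{x}(\underline{n}))$ is bounded above by a polynomial of that degree. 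By definition of $p_{\mathcal{D}}(M)$ as the least such degree (Theorem \ref{A}), this gives $p_{\mathcal{D}}(M)\leqslant\max\{p(D_i/D_{i-1}):i=1,\ldots,t\}$.

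The main obstacle I anticipate is the bookkeeping in the first step: making precise how $I_{\mathcal{D},M}(\underline{x}(\underline{n}))$ decomposes along the filtration. One expects an identity of the shape
\[
I_{\mathcal{D},M}(\underline{x}(\underline{n}))=\sum_{i=1}^t\Big(\ell\big(\tfrac{D_i}{D_{i-1}+\underline{x}(\underline{n})D_i}\big)-n_1\cdots n_{d_i}\,e(x_1,\ldots,x_{d_i};D_i/D_{i-1})\Big),
\]
but the subtlety is that $(x_1^{n_1},\ldots,x_d^{n_d})M\cap D_i$ need not equal $(x_1^{n_1},\ldots,x_{d_i}^{n_{d_i}})D_i$ exactly; here the goodness hypothesis $D_i\cap(x_{d_i+1},\ldots,x_d)M=0$ (and the fact that this persists after raising the $x_j$ to powers, since it only depends on the radical of the ideal generated by the last coordinates) is what forces the cross terms to vanish or to be absorbable. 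I would handle this by an induction on $t$ analogous to the $d$-induction in Lemma \ref{inequal}, peeling off $D_t/D_{t-1}$ first and using that $x_1,\ldots,x_{d_{t-1}}$ remains a good system of parameters of $M/x_{d}M$-type reductions; alternatively one can quote the length-additivity computations already implicit in \cite[Lemma 2.7]{CC1} and \cite[Proposition 2.9]{CC1}. Once the decomposition is in hand, the degree estimate is immediate from \cite[Theorem 2.3]{C2}, so the entire weight of the proof rests on that combinatorial identity.
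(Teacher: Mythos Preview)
Your proposal is correct and follows essentially the same approach as the paper: decompose along the filtration by induction on $t$ and bound each piece by $I_{D_i/D_{i-1}}(x_1,\ldots,x_{d_i})$, then invoke \cite{C2}. The obstacle you anticipate dissolves more simply than you expect---the paper uses only the inequality $\ell(M/\underline xM)\leqslant\ell(M/\underline xM+D_{t-1})+\ell(D_{t-1}/\underline xD_{t-1})$ together with the fact that goodness forces $\underline xD_i=(x_1,\ldots,x_{d_i})D_i$ (since $(x_{d_i+1},\ldots,x_d)D_i\subseteq D_i\cap(x_{d_i+1},\ldots,x_d)M=0$), so no exact identity for $\underline xM\cap D_i$ is needed.
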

\begin{proof}
Let $\underline{x}=x_1, \ldots, x_d$ be a good system of parameters of $M$. We have $\ell(M/\underline xM)=\ell(M/\underline xM+D_{t-1})+\ell(\underline xM+D_{t-1}/\underline xM)\leqslant \ell(M/ \underline xM+D_{t-1})+\ell(D_{t-1}/\underline xD_{t-1})$. Put $d_i=\dim D_i$. Note that $x_1,\ldots,x_{d_i}$ is a good system of parameters of $D_i$ and $ \underline xD_i=(x_1,\ldots,x_{d_i})D_i$. By induction on $t$ we have
$$\ell(M/ \underline xM)\leqslant \sum\limits_{t=1}^t\ell\big(D_i/(x_1,\ldots,x_{d_i})D_i+D_{i-1}\big)+\ell(D_0).$$
Combine this and replace $\underline x$ by $ \underline x(\underline n)$, we obtain
$$I_{\mathcal{D},M}(\underline x(\underline n))\leqslant \sum\limits_{i=1}^t\big(\ell(D_i/(x_1^{n_1},\ldots,x_{d_i}^{n_{d_i}})D_i+D_{i-1})-e(x_1^{n_1},\ldots,x_{d_i}^{n_{d_i}}; D_i/D_{i-1})\big),$$ and the result follows.
\end{proof}

\begin{lem} \label{3}
Assume that $R$ is a quotient of a Cohen-Macaulay ring. Then 
$$\dim V_M\leqslant p_\mathcal D(M).$$
\end{lem}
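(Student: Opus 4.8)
The plan is to show that every prime $\fp \in V_M$ satisfies $\dim R/\fp \leqslant p_\D(M)$; this immediately gives $\dim V_M \leqslant p_\D(M)$. So fix $\fp \in V_M$ with $\fp \neq \fm$ (the case $\fp = \fm$ is trivial since then $\dim R/\fp = 0 \leqslant p_\D(M)$ unless $M_\fm$ is already sequentially Cohen–Macaulay, which would put $\fp \notin V_M$). Set $r = \dim R/\fp$. Since $p_\D(M)$ is independent of the choice of good system of parameters by Theorem \ref{A}, I am free to pick a convenient one: by Proposition \ref{localdf}, there is a good system of parameters $\underline x = x_1, \ldots, x_d$ of $M$ such that $x_{r+1}, \ldots, x_s$ is a good system of parameters of $M_\fp$ (with respect to the dimension filtration $\D_\fp$ of $M_\fp$ furnished by Lemma \ref{dimfil}), where $s = \dim M_\fp + r$.

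The key step is to bound $I_{\D, M}(\underline x(\underline n))$ from below by a multiple of $I_{\D_\fp, M_\fp}(\underline x'(\underline n'))$ for a suitable subtuple of exponents, using localization. First, localizing at $\fp$ gives a surjection
\[
\big(M/(x_1^{n_1},\ldots,x_d^{n_d})M\big)_\fp \twoheadleftarrow \text{(relevant quotient)},
\]
and more importantly, for the Serre multiplicities, the associativity/additivity formula together with Lemma \ref{dfpro}(iii) relates $e(x_1^{n_1},\ldots,x_{d_i}^{n_{d_i}}; D_i)$ over $R$ with the corresponding multiplicities over $R_\fp$ of $(D_{s_j})_\fp$, picking up factors of the form $e(x_1^{n_1},\ldots; R/\fq)$ for $\fq \in \Assh D_i$ with $\fq \subseteq \fp$. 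Since the length function over $R$ dominates $\ell_{R_\fp}$ of the localization (a standard fact: $\ell_R(N) \geqslant \ell_{R_\fp}(N_\fp) \cdot \ell_R(R/\fm)$ trivially, and more usefully $\ell_R(N/\fm^k N)$ grows at least as fast as $\ell_{R_\fp}(N_\fp)$ times a Hilbert–Samuel term in the remaining parameters), one extracts an inequality of the shape
\[
I_{\D, M}(\underline x(\underline n)) \;\geqslant\; c \cdot (n_1 \cdots n_r) \cdot I_{\D_\fp, M_\fp}(x_{r+1}^{n_{r+1}}, \ldots, x_s^{n_s})
\]
for some positive constant $c$, valid for all $\underline n$.

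Then I finish as follows: since $\fp \in V_M$, by Lemma \ref{nCM}(i) we have $\fp \in \mathrm{nCM}(D_i/D_{i-1})$ for some $i$, hence $M_\fp$ is not sequentially Cohen–Macaulay, so by the last Corollary of Section 3 (the characterization via $p_\D = -\infty$) the function $I_{\D_\fp, M_\fp}(\underline x'(\underline n'))$ is not eventually zero; by its non-decreasing property (\cite[Proposition 2.9]{CC1}) it is strictly positive for all large exponents. Therefore the right-hand side above, as a function of $n_1, \ldots, n_d$, is bounded below by a nonzero constant times $n_1 \cdots n_r$, which is a polynomial of degree $r$. Consequently any polynomial bounding $I_{\D,M}(\underline x(\underline n))$ above must have degree at least $r$, i.e. $p_\D(M) \geqslant r = \dim R/\fp$. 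Taking the supremum over $\fp \in V_M$ gives $p_\D(M) \geqslant \dim V_M$.

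The main obstacle I anticipate is making the localization inequality $I_{\D,M}(\underline x(\underline n)) \geqslant c\,(n_1\cdots n_r)\,I_{\D_\fp,M_\fp}(\underline x'(\underline n'))$ precise: one has to track carefully how both the length term and each Serre multiplicity term behave under localization at $\fp$ and completion of the residue situation, ensuring the multiplicity subtractions match up so that the difference on the left genuinely dominates (a constant multiple of) the difference on the right rather than just the length terms doing so. This likely requires the catenary hypothesis (available since $R$ is a quotient of a Cohen–Macaulay ring) via Lemma \ref{dfpro}(iii) to guarantee the dimension bookkeeping $\dim D_{s_i} = \dim (D_{s_i})_\fp + \dim R/\fp$, and possibly an induction on $d - r$ reducing modulo $x_d, x_{d-1}, \ldots, x_{s+1}$ (which kill $M_\fp$ harmlessly by Lemma \ref{dfpro}(ii)) and then on the remaining parameters, mirroring the reduction used in the proof of Lemma \ref{inequal}.
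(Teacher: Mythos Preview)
Your overall strategy---show that $\dim R/\fp\leqslant p_\D(M)$ for every $\fp\in V_M$ by choosing a good system of parameters adapted to $\fp$ via Proposition~\ref{localdf}---matches the paper's. The divergence, and the real problem, is the ``localization inequality'' you posit:
\[
I_{\D,M}(\underline x(\underline n))\;\geqslant\; c\,(n_1\cdots n_r)\,I_{\D_\fp,M_\fp}(x_{r+1}^{n_{r+1}},\ldots,x_s^{n_s}).
\]
This is precisely the hard step, and your sketch does not supply it. The heuristic ``$\ell_R(N)\geqslant \ell_{R_\fp}(N_\fp)\cdot\ell_R(R/\fm)$'' is vacuous (for finite-length $N$ over $R$ and $\fp\neq\fm$ the localization is zero), and the vague Hilbert--Samuel remark does not produce the factor $n_1\cdots n_r$. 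More seriously, $I_{\D,M}$ is a \emph{difference} of a length and a sum of multiplicities; under localization both pieces change, and there is no a~priori reason the difference should be controlled from below by the corresponding difference on $M_\fp$. Your own last paragraph correctly flags this as the main obstacle, but a proof has to resolve it, not just name it.

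The paper avoids this difficulty by a genuinely different mechanism. Using \cite[Corollary~4.3]{AB} (and the identification $0:_Mx_{i+1}=D_j$ from \cite[Lemma~2.4]{CC1}), it rewrites $I_{\D,M}(\underline x)$ as a \emph{sum of nonnegative Serre multiplicities}
\[
I_{\D,M}(\underline x)=\sum_{i=0}^{d-1} e\bigl(x_1,\ldots,x_i;\,(x_{i+2},\ldots,x_d)M:_Mx_{i+1}\big/\,(x_{i+2},\ldots,x_d)M+0:_Mx_{i+1}\bigr).
\]
Substituting $x_j\mapsto x_j^{n_j}$ for $j\leqslant i$ shows each summand contributes $n_1\cdots n_i$ times a constant, so comparison with $p_\D(M)$ forces the $i$-th summand to vanish for every $i>p_\D(M)$. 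The vanishing of such a multiplicity is then translated, via \cite[Proposition~4.7]{AB}, into an associated-prime avoidance statement: $x_{i+1}\notin\fq$ for the relevant $\fq$ with $\dim R/\fq\geqslant i$. Catenarity pushes this down to $R_\fp$ (if $\dim R_\fp/\fq R_\fp\geqslant i-r$ then $\dim R/\fq\geqslant i$), and applying \cite[Proposition~4.7]{AB} in the reverse direction on $M_\fp$ yields the vanishing of every term in the analogous decomposition of $I_{\D_\fp,M_\fp}(x_{r+1},\ldots,x_s)$, hence $I_{\D_\fp,M_\fp}\equiv 0$ and $M_\fp$ is sequentially Cohen--Macaulay. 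The point is that one never compares the two $I$-functions directly; one compares the individual nonnegative summands through a multiplicity-vanishing criterion, which is what makes the localization step tractable.
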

\begin{proof}
We will prove that $M_\fp $ is a sequentially Cohen-Macaulay $R_\fp $-module for all prime ideals $\fp \in\mathrm{Supp}M$ such that $\dim R/\fp >p_\mathcal{D}(M)$.  By Lemmas \ref{dimfil} and Proposition \ref{localdf}, $M_\fp$ has the dimension filtration
$$\mathcal{D}_\fp :(D_{s_1})_\fp \subset (D_{s_2})_\fp \subset\ldots\subset (D_{s_l})_\fp =M_\fp,$$ and there is a good system of parameters $\underline{x}=x_1,\ldots,x_d$ of $M$ such that $x_{r+1}, \ldots, x_s$ is a good system of parameters of $M_\fp$,  where $r=\dim R/\fp $ and $s=\dim M_\fp+ \dim R/\fp$. First, we prove that $I_{\mathcal{D}_\fp ,M_\fp }(x_{r+1},\ldots,x_s )=0$. For all $i=1,\ldots,d,$  we set $d_i=\dim D_i$. By Lemma 2.4 of \cite{CC1}, we have $D_j=0:_Mx_i$ for all $d_j<i\leqslant d_{j+1}$. Hence
$e(x_1,\ldots, x_i; 0:_Mx_{i+1})=e(x_1,\ldots, x_i; D_j)$ if $i=d_j+1$ for some $j$ and $e(x_1,\ldots, x_i; 0:_Mx_{i+1})=0$ otherwise. Moreover, 
$$0:_Mx_{i+1}\simeq (0:_Mx_{i+1}+(x_{i+2}, \ldots, x_d)M)/(x_{i+2}, \ldots, x_d)M.$$
Then, by using Corollary 4.3 of \cite{AB}, we get
\[\begin{aligned}
I_{\mathcal{D},M}(\underline x)&=\ell(M/\underline xM)-\sum\limits_{i=0}^te(x_1,\ldots,x_{d_i};D_i)\\
&=\sum\limits_{i=0}^{d-1}e\big(x_1,\ldots, x_i;(x_{i+2},\ldots,x_{d})M:_Mx_{i+1}/(x_{i+2},\ldots,x_d)M\big)\\
&\hspace{0.5cm}-\sum\limits_{i=0}^{d-1}e\big(x_1,\ldots, x_i;0:_Mx_{i+1})\\
&=\sum\limits_{i=0}^{d-1}e\big(x_1,\ldots, x_i;(x_{i+2},\ldots,x_{d})M:_Mx_{i+1}/(x_{i+2},\ldots,x_d)M+0:_Mx_{i+1}\big)\\
&\ge e\big(x_1,\ldots, x_i;(x_{i+2},\ldots,x_{d})M:_Mx_{i+1}/(x_{i+2},\ldots,x_d)M+0:_Mx_{i+1}\big),
\end{aligned}\]
Replacing $\un x$ by $x_1^{n_1}, \ldots, x_i^{n_i}, x_{i+1}, \ldots, x_d$ we obtain
\begin{multline*}I_{\mathcal{D},M}(x_1^{n_1}, \ldots, x_i^{n_i}, x_{i+1}, \ldots, x_d)\geq\\ n_1\ldots n_ie\big(x_1,\ldots, x_i;(x_{i+2},\ldots,x_{d})M:_Mx_{i+1}/(x_{i+2},\ldots,x_d)M+0:_Mx_{i+1}\big),
\end{multline*}
for  all positive integers $n_1,\ldots,n_i,\ i=1,\ldots , t$. Note that $p_{\mathcal{D}}(M)$ is the degree of a polynomial bounding above the function $I_{\mathcal{D},M}(x_1^{n_1}, \ldots, x_i^{n_i}, x_{i+1}, \ldots, x_d)$. Therefore  
$$e\big(x_1,\ldots, x_i; (x_{i+2},\ldots,x_d)M:_Mx_{i+1}/(x_{i+2},\ldots,x_d)M+0:_Mx_{i+1}\big)=0$$ for all $i>p_\mathcal{D}(M)$. Let $i\geqslant r > p_D(M)$. By Proposition 4.7 of \cite{AB} we have $x_{i+1}\not\in \fq $ for all $\fq \in\Ass (M/(x_{i+2},\ldots,x_d)M+0:_Mx_{i+1})$ such that $\dim R/\fq \ge i$. 
 If $\dim R_\fp /\fq R_\fp \ge i-r$ then  $\dim R/\fq \ge i$, since $R$ is catenary. Hence $x_{i+1}\not\in \fq R_\fp $ for all  $\fq R_\fp \in\Ass \big(M_\fp /(x_{i+2},\ldots,x_s)M_\fp+0:_{M_\fp}x_{i+1}\big)$ such that $\dim R_\fp /\fq R_\fp \ge i-r$. Using Proposition 4.7 of \cite{AB} again we have 
$$e(x_{r+1},\ldots,x_i;(x_{i+2},\ldots,x_s)M_\fp :_{M_\fp }x_{i+1}/(x_{i+2},\ldots,x_s)M_\fp+0:_{M_\fp}x_{i+1})=0,$$
for all $i> r$. Therefore, 
\[\begin{aligned}
I&_{\mathcal D_\fp,M_\fp }(x_{r+1},\ldots,x_s)\\
&=\sum\limits_{i=r}^{s-1}e(x_{r+1},\ldots,x_i;(x_{i+2},\ldots,x_s)M_\fp :_{M_\fp }x_{i+1}/(x_{i+2},\ldots,x_s)M_\fp+0:_{M_\fp}x_{i+1})\\
&=0.
\end{aligned}\]
Finally, by replacing $x_{r+1},\ldots,x_s$ with $x_{r+1}^{n_{r+1}},\ldots,x_s^{n_s}$  for any positive integers $n_{r+1},\ldots , n_s$
 we can prove by the same method that  $I_{\mathcal{D}_\fp ,M_\fp }(x_{r+1}^{n_{r+1}},\ldots,x_s^{n_s})=0$. Therefore  $M_\fp $ is a sequentially Cohen-Macaulay $R_\fp $-module by virtue of Corollary 3.5.
\end{proof}
\vspace{.3cm}

\noindent{\bf Proof of Theorem \ref{B}.} Theorem \ref{B} follows  immediately from Lemma \ref{1}, Lemma \ref{2} and Lemma \ref{3}.
\vspace{.3cm}

In \cite{C2} the invariant $p(M)$ is studied in relations with the non-Cohen-Macaulay locus, the annihilators of the local cohomology modules and some others. Combining these results and Theorem \ref{B} we have the following immediate corollaries.
\begin{cor}
Let $R$ be a quotient of a Cohen-Macaulay ring and $\D: D_0\subset D_1\subset\ldots\subset D_t=M$ the dimension filtration of $M$. Let $k$ be an integer. The following are equivalent:
\item(i) $p_\D(M)\leqslant k$.
\item(ii) For all $\fp\in \Spec R$, $\dim R/\fp>k$, $M_\fp$ is sequentially Cohen-Macaulay and for each $i=1, \ldots, t$, either $\fp\not\in \Supp(D_i/D_{i-1})$ or $\dim R/\fp+\dim (D_i/D_{i-1})_\fp\geqslant \dim D_i$.
\end{cor}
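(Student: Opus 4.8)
The statement is an ``if and only if'' characterization of the inequality $p_\D(M)\leqslant k$ in terms of local data along primes $\fp$ with $\dim R/\fp>k$. The plan is to combine Theorem \ref{B} with the corresponding known characterization of the polynomial type $p(N)$ for an equidimensional module $N$ from \cite{C2}. Recall that by \cite{C2} (and the discussion preceding Lemma \ref{nCM}), for an equidimensional module $N$ over a quotient of a Cohen-Macaulay ring, $p(N)=\dim\mathrm{nCM}(N)$, so $p(N)\leqslant k$ if and only if for every $\fp\in\Supp N$ with $\dim R/\fp>k$ the localization $N_\fp$ is Cohen-Macaulay. Applying this to each quotient $D_i/D_{i-1}$, which is equidimensional, one gets that $\max\{p(D_i/D_{i-1}):i=1,\ldots,t\}\leqslant k$ if and only if for every $i$ and every $\fp\in\Supp(D_i/D_{i-1})$ with $\dim R/\fp>k$, the module $(D_i/D_{i-1})_\fp$ is Cohen-Macaulay.

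Next I would translate the Cohen-Macaulayness of all the $(D_i/D_{i-1})_\fp$ into the two clauses of (ii). Fix $\fp$ with $\dim R/\fp>k$. By Lemma \ref{dimfil} (valid since $R$, being a quotient of a Cohen-Macaulay ring, is catenary) the dimension filtration of $M_\fp$ is obtained from $\D$ by localizing and discarding repetitions: its successive quotients are exactly the nonzero $(D_{s_j}/D_{s_{j-1}})_\fp$. A standard fact (from \cite{CN}, or directly from the definition via the filtration in \cite{CC1}) is that $M_\fp$ is sequentially Cohen-Macaulay if and only if every quotient in its dimension filtration is Cohen-Macaulay. Now for a given index $i$ there are two cases: if $\fp\notin\Supp(D_i/D_{i-1})$ then $(D_i)_\fp=(D_{i-1})_\fp$ and this step contributes nothing; if $\fp\in\Supp(D_i/D_{i-1})$, then $(D_i/D_{i-1})_\fp\neq 0$, and by Lemma \ref{dfpro}(iii) its dimension is $\dim D_i-\dim R/\fp$ precisely when $\dim R/\fp+\dim(D_i/D_{i-1})_\fp\geqslant\dim D_i$ holds (that is the condition under which $\fp$ contains a prime in $\Assh(D_i/D_{i-1})$, equivalently $i=s_j$ for some $j$ and the localized quotient appears in $\D_\fp$ without being merged). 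Thus ``$M_\fp$ sequentially Cohen-Macaulay and, for each $i$, $\fp\notin\Supp(D_i/D_{i-1})$ or $\dim R/\fp+\dim(D_i/D_{i-1})_\fp\geqslant\dim D_i$'' is equivalent to ``$(D_i/D_{i-1})_\fp$ is Cohen-Macaulay for every $i$''; one should check both directions of this equivalence carefully, using that a localization of an equidimensional module need not remain equidimensional unless exactly the dimension-drop condition above is met.

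Putting the pieces together: (i) $p_\D(M)\leqslant k$ is equivalent by Theorem \ref{B} to $\max\{p(D_i/D_{i-1})\}\leqslant k$, which by the first paragraph is equivalent to Cohen-Macaulayness of every $(D_i/D_{i-1})_\fp$ for all $\fp$ with $\dim R/\fp>k$, which by the second paragraph is exactly clause (ii). I expect the main obstacle to be the bookkeeping in the second paragraph: one must be careful to relate $\Supp(D_i/D_{i-1})$, the appearance of index $i$ among the $s_j$, and the dimension equality of Lemma \ref{dfpro}(iii), and to handle the merging of consecutive steps of $\D$ under localization without sign or index errors. The catenary hypothesis and Lemmas \ref{dimfil}, \ref{dfpro} are exactly what is needed to control these dimension counts, so no new machinery should be required beyond assembling these facts.
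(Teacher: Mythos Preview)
Your approach is correct and essentially the same as the paper's: both combine Theorem~\ref{B} with the characterization of the polynomial type from \cite{C2} applied to each equidimensional quotient $D_i/D_{i-1}$. The paper's proof is the one-liner ``implied from Theorem~\ref{B} and Theorem~4.1 of \cite{C2}''; Theorem~4.1 of \cite{C2} already contains both clauses of (ii) (Cohen--Macaulayness of the localization together with the dimension inequality), so applying it to each $D_i/D_{i-1}$ and using Lemma~\ref{nCM}(i) gives (ii) directly.

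Your route through Corollary~4.2 of \cite{C2} and the bookkeeping with Lemmas~\ref{dimfil}, \ref{dfpro} reaches the same conclusion but does more work than necessary. In particular, the ``main obstacle'' you anticipate is not really there: since each $D_i/D_{i-1}$ is equidimensional (its associated primes all have dimension $d_i$) and $R$ is catenary, for any $\fp\in\Supp(D_i/D_{i-1})$ one automatically has $\dim R/\fp+\dim(D_i/D_{i-1})_\fp=\dim D_i$, so the second clause of (ii) is always satisfied. Likewise, by Lemma~\ref{nCM}(i), $M_\fp$ is sequentially Cohen--Macaulay if and only if every $(D_i/D_{i-1})_\fp$ is Cohen--Macaulay, with no further dimension bookkeeping needed. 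So your translation step is correct, but the caveats you flag about merging and index tracking are harmless rather than substantive.
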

\begin{proof}
The corollary is implied from Theorem \ref{B} and Theorem 4.1 of \cite{C2}.
\end{proof}
\begin{cor}
Let $R$ be a quotient of a Cohen-Macaulay ring and $\D: D_0\subset D_1\subset\ldots\subset D_t=M$ be the dimension filtration of $M$. Let  $\fp\in \Supp M$ with $\dim R/\fp \leqslant p_\D(M)$. Denote the dimension filtration of $M_\fp$ by $\D_\fp$. We have
$$p_{\D_\fp}(M_\fp)\leqslant p_\D(M)-\dim R/\fp.$$
\end{cor}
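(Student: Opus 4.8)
The plan is to derive the corollary from Theorem~\ref{B}, applied simultaneously to $M$ over $R$ and to $M_\fp$ over $R_\fp$, together with the fact that both the formation of the dimension filtration and the sequentially Cohen--Macaulay property behave well under localization.

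First I would record that $R_\fp$ is again a quotient of a Cohen--Macaulay ring: writing $R=S/I$ with $(S,\fn)$ Cohen--Macaulay local and letting $\fp'$ be the preimage of $\fp$ in $S$, we have $R_\fp\cong S_{\fp'}/IS_{\fp'}$ with $S_{\fp'}$ Cohen--Macaulay. In particular $R$ is catenary, hence so is every quotient $R/\fq$. By Lemma~\ref{dimfil}, $\D_\fp$ really is the dimension filtration of $M_\fp$, so Theorem~\ref{B} applies to $M_\fp$ over $R_\fp$ and gives $p_{\D_\fp}(M_\fp)=\dim V_{M_\fp}$, where $V_{M_\fp}=\{Q\in\Supp M_\fp : (M_\fp)_Q\text{ is not sequentially Cohen--Macaulay}\}$; likewise $p_\D(M)=\dim V_M$ by Theorem~\ref{B} for $M$.

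Next I would identify $V_{M_\fp}$ with a subset of $V_M$. Every prime of $R_\fp$ is $\fq R_\fp$ for a unique $\fq\in\Spec R$ with $\fq\subseteq\fp$, and $(M_\fp)_{\fq R_\fp}\cong M_\fq$; hence $\fq R_\fp\in V_{M_\fp}$ if and only if $\fq\subseteq\fp$ and $\fq\in V_M$. Since $R/\fq$ is a catenary local domain, $\dim R_\fp/\fq R_\fp=\operatorname{ht}(\fp/\fq)=\dim R/\fq-\dim R/\fp$ for each such $\fq$, so that $\dim V_{M_\fp}=\max\{\dim R/\fq:\fq\in V_M,\ \fq\subseteq\fp\}-\dim R/\fp$. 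Therefore
$$p_{\D_\fp}(M_\fp)=\dim V_{M_\fp}\leqslant\dim V_M-\dim R/\fp=p_\D(M)-\dim R/\fp.$$
If $M_\fp$ is sequentially Cohen--Macaulay, then $V_{M_\fp}=\varnothing$, the left-hand side is $-\infty$ and there is nothing to prove; otherwise $\fp\in V_M$, so $\dim R/\fp\leqslant\dim V_M=p_\D(M)$ and the displayed chain is meaningful (so the stated hypothesis is in fact automatic in the only nontrivial case). Either way the corollary follows.

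I do not anticipate a serious obstacle. The only point needing a little care is the dimension identity $\dim R_\fp/\fq R_\fp=\dim R/\fq-\dim R/\fp$ for $\fq\subseteq\fp$, which rests on $R$ being catenary together with the standard fact that in a catenary local domain the height of a prime plus the dimension of the corresponding quotient equals the dimension of the ring; and, when invoking $\dim V_M=\sup\{\dim R/\fq:\fq\in V_M\}$, one should keep in mind that $V_M$ is a genuine (closed) subset of $\Spec R$, which is Lemma~\ref{nCM}(ii).
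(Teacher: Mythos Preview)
Your argument is correct. The paper's own proof is visibly truncated in the source: it invokes Lemma~\ref{dimfil} to describe $\D_\fp$ and then breaks off mid-sentence at ``We have by Theorem~\ref{B},'' with nothing after. Your route---identifying both $p_\D(M)$ and $p_{\D_\fp}(M_\fp)$ with the dimensions of the respective non-sequentially-Cohen--Macaulay loci via Theorem~\ref{B} (noting that $R_\fp$ is again a quotient of a Cohen--Macaulay ring), and then using the catenary identity $\dim R_\fp/\fq R_\fp=\dim R/\fq-\dim R/\fp$ together with $(M_\fp)_{\fq R_\fp}\cong M_\fq$---is a clean and natural completion of that fragment. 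An alternative completion, perhaps the one the paper was heading toward given that it spells out the successive quotients of $\D_\fp$, would use the other equality in Theorem~\ref{B}, namely $p_{\D_\fp}(M_\fp)=\max_i p\bigl((D_{s_i}/D_{s_{i-1}})_\fp\bigr)$, combined with the localization bound $p(N_\fp)\leqslant p(N)-\dim R/\fp$ for equidimensional $N$ from~\cite{C2}; your approach via $V_M$ is more self-contained within the present paper. Your closing remark that the hypothesis $\dim R/\fp\leqslant p_\D(M)$ is automatic whenever $M_\fp$ is not already sequentially Cohen--Macaulay is also correct.
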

\begin{proof}
By Lemma \ref{dimfil}, there are $0\leqslant s_1<s_2\ldots<s_l\leqslant t$ such that the filtration $\mathcal{D}_\fp :(D_{s_1})_\fp \subset (D_{s_2})_\fp \subset\ldots\subset (D_{s_l})_\fp =M_\fp $ is the dimension filtration of $M_\fp$. We have by Theorem \ref{B},

\end{proof}

The last example shows that in Theorem \ref{B} we can not replace the dimension filtration $\D$ by a general filtration satisfying the dimension condition.

\begin{ex} Let $R=k[[X_1, X_2, X_3]]$ be the local ring of all formal power series with coefficients in a field $k$. Let $I=(X_1X_3, X_1X_4, X_1X_5, X_2X_3, X_2X_4, X_2X_5)$ and $M=R/I$. Put $M_1=(X_1X_2, X_2^2)+I/I\subset M$. Then $\dim M_1=2<\dim M$ and the filtration $\F: 0=M_0\subset M_1\subset M_2=M$ satisfies the dimension condition. It is easy to see that $x_1=X_1+X_4$, $x_2=X_2+X_5$, $x_3=X_3$ is a good system of parameters of $M$ with respect to $\F$. By computing directly we obtain
$$I_{\F, M}(x_1^{n_1}, x_2^{n_2}, x_3^{n_3})=1,\ \text{ for all } n_1, n_2, n_3>0.$$
Consequently, $p_\F(M)=0$. On the other hand, 
$$\ell(M/M_1+(x_1^{n_1}, x_2^{n_2}, x_3^{n_3})M)=n_1n_2n_3+n_1+1,\ \text{ for all } n_1, n_2, n_3>0.$$
Thus $p(M/M_1)=1$ and $p_\F(M)<p(M/M_1)$.
\end{ex}


\section{Flat extensions}

In this final section we study the behavior of the invariant $p_\F(M)$ under flat extensions. Let $(R,\fm)\to(S,\fn)$ be a local flat homorphism of catenary Noetherian local rings. Let $M$ be a finitely generated $R$-module and $\F: M_0\subset M_1\subset \ldots \subset M_t=M$ a filtration satisfying the dimension condition. Since $S$ is a flat extension of $R$, there corresponds to $\F$ a filtration of submodules of $M\otimes_R S$
$$\F\otimes S:M_0\otimes_RS\subset M_1\otimes_RS\subset\ldots\subset M_t\otimes_RS=M \otimes_RS.$$
Denote $l=\dim S/\fm S$ and $d_i=\dim M_i$, $i=0, \ldots, t$. By \cite[Theorem 15.1]{M}, if $R, S$ are catenary then $\dim M\otimes_RS=d+l$ and $\dim M_i\otimes_RS=d_i+l$ for $i=0, \ldots, t-1$. Thus the filtration $\F\otimes S$ satisfies the dimension condition and we obtain the invariant $p_{\F\otimes S}(M\otimes_RS)$ by Theorem \ref{A}. Keep these notations, we have the following lemma before stating the result relating $p_\F(M)$ and $p_{\F\otimes S}(M\otimes_RS)$.
\begin{lem}\label{51}
Let $(R,\fm)\to(S,\fn)$ be a local flat homomorphism of catenary Noetherian local rings. Assume that $M$ is a finitely generated $R$-module and $\F$ is a filtration of submodules of $M$ satisfying the dimension condition. The module $M\otimes_RS$ has a good system of parameters $x_1, \ldots, x_{l+d}$ with respect to the filtration $\F\otimes_RS$ such that $x_{l+1},\ldots,x_{l+d}$ is the image of a good system of parameter of $M$.
\end{lem}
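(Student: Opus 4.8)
The plan is to first choose a good system of parameters $\underline y = y_1, \ldots, y_d$ of $M$ with respect to $\F$, then to extend its image in $M\otimes_R S$ to a full system of parameters of $M\otimes_R S$ by appending $l = \dim S/\fm S$ elements $y_1', \ldots, y_l'$ whose images generate a system of parameters of $S/\fm S$, and finally to verify that the resulting sequence $x_1, \ldots, x_{l+d} = y_1', \ldots, y_l', y_1\otimes 1, \ldots, y_d\otimes 1$ is good with respect to $\F\otimes S$. Throughout I would use the flatness of $S$ over $R$, which makes $-\otimes_R S$ exact, and the dimension formulas $\dim M_i\otimes_R S = d_i + l$ recorded just above the statement from \cite[Theorem 15.1]{M}.

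First I would check that $x_1, \ldots, x_{l+d}$ is a system of parameters of $M\otimes_R S$. Since $\underline y$ is a system of parameters of $M$, the module $M/\underline y M$ has finite length, and $(M/\underline y M)\otimes_R S \cong (M\otimes_R S)/(\underline y\otimes 1)(M\otimes_R S)$ has dimension equal to $\dim S/\fm S \cdot 0$-type bound, i.e. it is annihilated by a power of $\fm S$; then modding out further by $y_1', \ldots, y_l'$ whose images form a system of parameters of $S/\fm S$ produces a module of finite length. Combined with $\dim M\otimes_R S = d+l$, this shows $x_1, \ldots, x_{l+d}$ is indeed a system of parameters.

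The main point, and the step I expect to be the real obstacle, is verifying the goodness condition
$$(M_i\otimes_R S)\cap (x_{d_i+l+1}, \ldots, x_{l+d})(M\otimes_R S) = 0$$
for $i = 0, 1, \ldots, t-1$. Here I would use that the indices $d_i + l + 1, \ldots, l+d$ pick out exactly the elements $y_{d_i+1}\otimes 1, \ldots, y_d\otimes 1$, none of the appended elements $y_j'$. The idea is to apply $-\otimes_R S$ to the exact sequence
$$0 \to M_i \cap (y_{d_i+1}, \ldots, y_d)M \to M_i \oplus (y_{d_i+1}, \ldots, y_d)M \to M_i + (y_{d_i+1}, \ldots, y_d)M \to 0$$
(or more directly to $0 \to M_i\cap (y_{d_i+1},\ldots,y_d)M \to M_i \to M/(y_{d_i+1},\ldots,y_d)M$). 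Because $\underline y$ is good for $\F$, the kernel $M_i\cap (y_{d_i+1},\ldots,y_d)M$ is zero, so by flatness $M_i\otimes_R S$ injects into $(M/(y_{d_i+1},\ldots,y_d)M)\otimes_R S$, which means precisely that $(M_i\otimes_R S)\cap (y_{d_i+1}\otimes 1, \ldots, y_d\otimes 1)(M\otimes_R S) = 0$. The care needed is in correctly identifying the submodule $(y_{d_i+1},\ldots,y_d)M \otimes_R S$ with $(y_{d_i+1}\otimes 1, \ldots, y_d\otimes 1)(M\otimes_R S)$ inside $M\otimes_R S$, which again follows from flatness applied to the inclusion $(y_{d_i+1},\ldots,y_d)M \hookrightarrow M$. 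Once this is done for all $i$, the sequence $x_1, \ldots, x_{l+d}$ is a good system of parameters of $M\otimes_R S$ with respect to $\F\otimes S$ of the required form, and we may take the good system of parameters of $M$ in the statement to be $\underline y$ itself.
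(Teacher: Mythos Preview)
Your proof is correct and follows essentially the same approach as the paper's: pick a good system of parameters of $M$, extend it by $l$ elements to a system of parameters of $M\otimes_R S$, and use flatness to transfer the intersection condition $M_i\cap(y_{d_i+1},\ldots,y_d)M=0$ to $M\otimes_R S$. The paper compresses the last step into the single line $(x_{l+d_i+1},\ldots,x_{l+d})(M\otimes_RS)\cap(M_i\otimes_RS)=\big((x_{l+d_i+1},\ldots,x_{l+d})M\cap M_i\big)\otimes_RS=0$, whereas you spell out both the construction of $x_1,\ldots,x_l$ and the flatness argument via exact sequences; the content is the same.
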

\begin{proof}
Following \cite[Lemma 2.5]{CC1}, there always exists a good system of parameters $x_{l+1}, \ldots, x_{l+d}$ of $M$. Under the flat extension, we obtain a system of parameters $x_1, \ldots, x_l, x_{l+1}, \ldots, x_{l+d}\in \fn$ of $M\otimes_RS$. Moreover, we have
$$(x_{l+d_i+1}, \ldots, x_{l+d})(M\otimes_RS)\cap(M_i\otimes_RS)=((x_{l+d_i+1}, \ldots, x_{l+d})M\cap M_i)\otimes_RS=0.$$
So $x_1, \ldots, x_{l+d}$ is a good system of parameters of $M\otimes_RS$ with respect to $\F\otimes_RS$.
\end{proof}

\begin{theorem}\label{flat}
Let $\varphi:(R,\fm)\to (S,\fn)$ be a local flat homomorphism of catenary Noetherian local rings and $M$ a finitely generated $R$-module. Let $\F: M_0\subset M_1\subset \ldots\subset M_t=M$ be a filtration satisfying the dimension condition. Then
$$p_{\F\otimes_RS}(M\otimes_RS)=\max\{\dim S/\fm S+p_\F(M),\dim M+p(S/\fm S)\}.$$ 
\end{theorem}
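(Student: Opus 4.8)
The plan is to bound $I_{\F\otimes S, M\otimes S}(\underline x(\underline n))$ from above and below using the special good system of parameters furnished by Lemma \ref{51}: write $S/\fm S$ with system of parameters $\bar x_1,\ldots,\bar x_l$ and let $x_{l+1},\ldots,x_{l+d}$ be the image of a good system of parameters $\underline y=y_1,\ldots,y_d$ of $M$ with respect to $\F$. Because $S$ is flat over $R$, each $M_i\otimes S$ is computed by base change and, more importantly, lengths multiply: for any $R$-module $N$ of finite length one has $\ell_S(N\otimes_R S)=\ell_R(N)\cdot\ell_S(S/\fm S)$, and the Serre multiplicities satisfy $e(x_1,\ldots,x_{l+d_i};M_i\otimes_RS)=e(\bar x_1,\ldots,\bar x_l;S/\fm S)\cdot e(y_1,\ldots,y_{d_i};M_i)$ by the associativity formula for multiplicities under flat base change (cf. \cite[Theorem 15.1]{M} and the standard behaviour of multiplicity). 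So the first step is to establish a clean identity expressing $I_{\F\otimes S,M\otimes S}(\underline x(\underline n))$ in terms of $I_{\F,M}$ of the $y$-part and $I_{S/\fm S}$ of the $\bar x$-part.

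The cleanest route is dimension reduction, exactly as in Lemma \ref{inequal}: cut down by the last $d$ parameters $x_{l+1},\ldots,x_{l+d}$ one at a time, using \cite[Lemma 2.7]{CC1} to peel off $I$ additively, and observe that after killing $y_1,\ldots,y_d$ the residue is $M\otimes_R S/\underline y(M\otimes_R S)\cong (M/\underline y M)\otimes_R S$, whose associated $I$-defect relates to $I_{S/\fm S}$ applied to $\bar x(\underline n)$ with coefficient the length $\ell_R(M/\underline y M)$; and, dually, cutting by $x_1,\ldots,x_l$ first leaves $S/\fm S\otimes_R M$ contributions governed by $e(S/\fm S)\cdot I_{\F,M}$. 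Carrying both reductions through, one expects an estimate of the shape
\begin{align*}
I_{\F\otimes S,M\otimes S}(\underline x(\underline n)) \ &\leqslant\ C\cdot\Big( n_{l+1}\cdots n_{l+d}\cdot I_{\F,M}(\underline y(n_{l+1},\ldots,n_{l+d})) \\
&\qquad\ +\ n_1\cdots n_l\cdot \ell_R(M/\underline y M)\cdot I_{S/\fm S}(\bar x(n_1,\ldots,n_l))\Big),
\end{align*}
together with a matching lower bound (up to positive constants and up to replacing each $n_j$ by a fixed multiple) obtained by choosing all the exponents equal and using the non-decreasing property \cite[Proposition 2.9]{CC1}. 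Taking least degrees of the polynomials bounding each side: the degree of the first summand is $d + p_\F(M)$ while the degree of the second is $l + p(S/\fm S)$ (here $l=\dim S/\fm S$, $d=\dim M$), and the degree of a sum of non-negative functions is the max of the degrees, which gives the claimed formula $\max\{l+p_\F(M),\ d+p(S/\fm S)\}$.

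The main obstacle is the bookkeeping of the dimension reduction when the filtration steps $d_i$ are interleaved with the split between the $x$- and $y$-parameters: one must check that the filtration $\F\otimes S/x_j(\F\otimes S)$ produced at each step is exactly the base change of the corresponding quotient filtration of $\F$ and still satisfies the dimension condition (using $\dim M_i\otimes S = d_i + l$), and that the additive error terms $e(\cdots;0:x_j/M_{t-1})$ appearing in Lemma \ref{inequal}'s identity base-change correctly. A secondary technical point is the lower bound: one needs that $I_{\F\otimes S,M\otimes S}$ does \emph{dominate} each of the two contributions separately (not just their sum with sign), which follows by specializing the exponents on the complementary block to $1$ and invoking monotonicity, much as in the proof of Lemma \ref{3}. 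Once these two points are handled, equating least polynomial degrees is immediate and Theorem \ref{A} guarantees the invariants $p_{\F\otimes S}(M\otimes_R S)$, $p_\F(M)$, $p(S/\fm S)$ are well defined independently of the chosen parameters, so the special choice above loses no generality.
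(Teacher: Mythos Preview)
Your proposal contains a genuine error in the degree bookkeeping: the displayed upper bound has the monomial prefactors attached to the wrong summands, and consequently you conclude that the two contributions have degrees $d+p_\F(M)$ and $l+p(S/\fm S)$, whereas the theorem asserts $l+p_\F(M)$ and $d+p(S/\fm S)$ (here $l=\dim S/\fm S$, $d=\dim M$). These are not the same numbers in general, so as written your argument proves a different (and false) formula. The source of the slip is the iterated dimension-reduction scheme: if you cut by $x_{l+1},\ldots,x_{l+d}$ first, the residual $I$-defect is $\ell_R\big(M/\underline y(\underline n')M\big)\cdot I_{S/\fm S}(\bar x(\underline n''))$, and the length factor already carries degree $d$ in $\underline n'$; there is no extra factor $n_1\cdots n_l$. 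Symmetrically, cutting by $x_1,\ldots,x_l$ first produces $e(\bar x(\underline n'');S/\fm S)\cdot I_{\F,M}(\underline y(\underline n'))=n_1\cdots n_l\,e(\bar x;S/\fm S)\,I_{\F,M}(\underline y(\underline n'))$, of degree $l+p_\F(M)$, not $d+p_\F(M)$.

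More importantly, the whole reduction-by-one-parameter machinery is unnecessary here. Flatness gives you outright the multiplicativity of lengths and of multiplicities (Lech's formula), and a two-line manipulation of $\ell(M\otimes S/(x_1,\ldots,x_{l+d})M\otimes S)-\sum_i e(x_1,\ldots,x_{l+d_i};M_i\otimes S)$ yields the \emph{exact identity}
\[
I_{\F\otimes S,\,M\otimes S}(\underline x)
=\ell_R\!\big(M/(x_{l+1},\ldots,x_{l+d})M\big)\,I_{S/\fm S}(x_1,\ldots,x_l)
+e(x_1,\ldots,x_l;S/\fm S)\,I_{\F,M}(x_{l+1},\ldots,x_{l+d}),
\]
valid for all powers $\underline x(\underline n)$. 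Since both summands are non-negative, the least bounding degree of the left side equals the maximum of the least bounding degrees of the two summands, which are $d+p(S/\fm S)$ and $l+p_\F(M)$ respectively. No inequalities, no constants $C$, and no separate lower-bound argument are needed; the identity does all the work. This is the route taken in the paper, and I would recommend replacing the dimension-reduction outline with this direct computation.
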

\begin{proof}
Put $l=\dim S/\fm S$. Following Lemma \ref{51}, $M\otimes_RS$ has a good system of parameters $x_1, \ldots, x_{l+d}$ with respect to $\F\otimes S$ such that $x_{l+1}, \ldots, x_{l+d}$ is a good system of parameters of $M$. Put $d_i=\dim M_i$, $i=0, 1, \ldots, t$. We have
$$\frac{S}{(x_1,\ldots,x_l)+\fm S}\otimes_R\frac{M_i}{(x_{l+1},\ldots,x_{l+d_i})M_i}\cong \frac{M_i\otimes_RS}{(x_{1},\ldots,x_{l+d_i})M_i\otimes_RS}$$
Then,
$$\ell(\frac{M_i\otimes_RS}{(x_1,\ldots,x_{l+d_i})M_i\otimes_RS})=\ell(\frac{S}{(x_1,\ldots,x_l)+\fm S})\ell(\frac{M_i}{(x_{l+1},\ldots,x_{l+d_i})M_i}).$$
By Lech's formula we obtain
$$e(x_1,\ldots,x_{l+d_i};M_i\otimes_RS)=e(x_1,\ldots,x_l;S/\fm S)e(x_{l+1},\ldots,x_{l+d_i};M_i).
$$
Hence we have
$$\begin{aligned}
I_{\F\otimes_RS, M\otimes_RS}(x_1,\ldots,x_{l+d})
=&\ell(\frac{M\otimes_RS}{(x_{1},\ldots,x_{l+d})M\otimes_RS})-\sum\limits_{i=0}^te(x_1,\ldots,x_{d_i+l};M_i\otimes_RS)\\
=&\ell(\frac{S}{(x_{1},\ldots,x_{l})+\fm S})\ell(\frac{M}{(x_{l+1},\ldots,x_{l+d})M})\\
&-\sum\limits_{i=0}^te(x_1,\ldots,x_{l};S/\fm S)e(x_{l+1},\ldots,x_{d_i+l};S/\fm S)\\
=&\ell(\frac{M}{(x_{l+1},\ldots,x_{l+d})M})I_{S/\fm S}(x_1,\ldots,x_l)\\&+e(x_1,\ldots,x_{l};S/\fm S)I_{\F,M}(x_{l+1},\ldots,x_{l+d}),
\end{aligned}$$
where $I_{S/\fm S}(x_1,\ldots,x_l)=\ell(\frac{S}{(x_{1},\ldots,x_{l}) +\fm S})-e(x_1,\ldots,x_{l};S/\fm S)$. Therefore by Thereom \ref{A} we have
$$p_{\F\otimes_RS}(M\otimes_RS)=\max\{\dim S/\fm S+p_\F(M),\dim M+p(S/\fm S)\}.$$
\end{proof}

Therem \ref{flat} leads to some interesting consequences.

\begin{cor} \label{co63}
Keep all hypotheses as in Theorem \ref{flat}. Assume that $M$ has the dimension filtration $\D$.
\item(i) If $S/\fm S$ is Cohen-Macaulay then $p_{\F\otimes_RS}(M\otimes_RS)=\dim S/\fm S+p_\F(M)$ for any filtration $\F$ satisfying the dimension condition. 
\item(ii) If $M$ is sequentially Cohen-Macaulay then $p_{\D\otimes_RS}(M\otimes_RS)=\dim M+p(S/\fm S)$.
\item(iii) $M\otimes_RS$ is sequentially Cohen-Macaulay if $M$ is sequentially Cohen-Macaulay and $S/\fm S$ is Cohen-Macaulay.
\end{cor}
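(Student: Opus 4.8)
The plan is to derive all three parts directly from Theorem \ref{flat}, which gives the master formula $p_{\F\otimes_RS}(M\otimes_RS)=\max\{\dim S/\fm S+p_\F(M),\ \dim M+p(S/\fm S)\}$, together with the characterization of sequential Cohen-Macaulayness via the polynomial-type invariant. Recall from the corollary following the proof of Theorem \ref{A} that a module $N$ (with $\dim N = e$) is sequentially Cohen-Macaulay if and only if there is a filtration $\mathcal{G}$ satisfying the dimension condition with $p_{\mathcal{G}}(N)=-\infty$; moreover, by \cite{C2} the ordinary polynomial type satisfies $p(N)=-\infty$ precisely when $N$ is Cohen-Macaulay. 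These two facts, plus the convention that $\dim N + (-\infty) = -\infty$ and $\max\{-\infty,-\infty\}=-\infty$, will make the three statements fall out as arithmetic on the master formula.

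First I would prove (i). If $S/\fm S$ is Cohen-Macaulay then $p(S/\fm S)=-\infty$, so the second term $\dim M + p(S/\fm S)$ in the master formula equals $-\infty$ and is dominated by the first term $\dim S/\fm S + p_\F(M)$ (which is at least $0$ if $p_\F(M)\geq 0$, and equals $\dim S/\fm S + (-\infty)=-\infty$ only when $p_\F(M)=-\infty$, in which case both sides are $-\infty$ and the asserted equality still holds). In all cases $\max$ picks out the first term, giving $p_{\F\otimes_RS}(M\otimes_RS)=\dim S/\fm S + p_\F(M)$. Note this uses only Theorem \ref{flat} and the Cohen-Macaulay$\iff p=-\infty$ criterion; it does not need $\F$ to be the dimension filtration, so the statement holds for an arbitrary filtration satisfying the dimension condition, exactly as claimed.

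Next, for (ii), take $\F=\D$, the dimension filtration. If $M$ is sequentially Cohen-Macaulay then, by the characterization recalled above applied to the dimension filtration, $p_\D(M)=-\infty$. Hence the first term $\dim S/\fm S + p_\D(M) = -\infty$, and the master formula collapses to $p_{\D\otimes_RS}(M\otimes_RS)=\dim M + p(S/\fm S)$. The only subtlety worth a sentence is that $\D\otimes_RS$ really is the dimension filtration of $M\otimes_RS$ — but for the statement as written we only need that $\D\otimes_RS$ is \emph{a} filtration satisfying the dimension condition (which was checked in the paragraph preceding Lemma \ref{51}), so that $p_{\D\otimes_RS}(M\otimes_RS)$ is well-defined; the equality of invariants then comes straight from Theorem \ref{flat} applied with $\F=\D$.

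Finally (iii) combines (i) and (ii): if $M$ is sequentially Cohen-Macaulay and $S/\fm S$ is Cohen-Macaulay, then $p_\D(M)=-\infty$ and $p(S/\fm S)=-\infty$, so both terms in the master formula are $-\infty$, whence $p_{\D\otimes_RS}(M\otimes_RS)=-\infty$. By the characterization of sequential Cohen-Macaulayness in terms of the polynomial-type invariant of the dimension filtration, this forces $M\otimes_RS$ to be sequentially Cohen-Macaulay. The main (and only real) obstacle is making sure the degree conventions for the zero polynomial ($-\infty$) and the arithmetic $\dim N + (-\infty)=-\infty$, $\max\{-\infty,-\infty\}=-\infty$ are consistently in force so that the $\max$ formula degenerates correctly in the boundary cases; once that bookkeeping is fixed, each part is a one-line specialization of Theorem \ref{flat}.
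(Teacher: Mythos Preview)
Your proposal is correct and matches the paper's approach: the paper states Corollary~\ref{co63} without proof, treating it as an immediate specialization of Theorem~\ref{flat} together with the $p=-\infty$ criteria for (sequential) Cohen--Macaulayness, which is exactly what you do. The only minor remark is that for (ii) you invoke $p_\D(M)=-\infty$ ``by the characterization recalled above applied to the dimension filtration'', whereas Corollary~3.5 literally asserts the existence of \emph{some} filtration with $p_\F=-\infty$; the passage to $\D$ itself uses the well-known fact (implicit in \cite{CC1}, Theorem~4.2) that for a sequentially Cohen--Macaulay module the dimension filtration is the witnessing one, but this is standard and does not affect the validity of your argument.
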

In fact, a module is Cohen-Macaulay if and only if so is its $\fm$-adic completion. However, it is not the case of the sequentially Cohen-Macaulay property. The reason is under a flat base change $R\rightarrow S$, the dimension filtration of $M$ is not preserved as the dimension filtration of $M\otimes_RS$. Let $(R, \fm)$ be the two-dimensional domain  considered by Ferrand-Raynaud in \cite{FR}. It is obvious that $R$ is not sequentially Cohen-Macaulay. However, the $\fm$-adic completion $\hat R$ has the dimension filtration $0=D_0\subset D_1\subset D_2=\hat R$ where $\dim D_1=1$ and $\hat R/D_1$ is Cohen-Macaulay by \cite[Example 6.1]{Sch}. Thus $\hat R$ is a sequentially Cohen-Macaulay ring. This shows that the converse of Corollary \ref{co63}($iii$) does not hold.

The next corollary of Theorem \ref{flat} provides a sufficient condition for the sequentially Cohen-Macaulay property on a module and its completion.
\begin{cor}
Let $\hat R$ be the $\fm$-adic completion of $R$ and $M$ a finitely generated $R$-module. Assume in addition that $R$ is catenary. Then  $p_{\D\otimes\hat R}(\hat M)=p_\D(M)$. In particular, if $M$ is sequentially Cohen-Macaulay then so is $\hat M$.
\end{cor}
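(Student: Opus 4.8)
The plan is to deduce this corollary directly from Theorem \ref{flat} applied to the flat local homomorphism $\varphi : R \to \hat R$. First I would observe that the $\fm$-adic completion map $R \to \hat R$ is a local flat homomorphism, and since $R$ is catenary (and $\hat R$, being complete, is automatically catenary), the hypotheses of Theorem \ref{flat} are satisfied with $S = \hat R$. The crucial simplification is that $\hat R / \fm \hat R \cong R/\fm$ is a field, so $\dim S/\fm S = \dim \hat R/\fm \hat R = 0$ and $S/\fm S$ is (trivially) Cohen-Macaulay, whence $p(S/\fm S) = -\infty$.

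Next I would substitute these values into the formula of Theorem \ref{flat}:
$$p_{\D \otimes_R \hat R}(M \otimes_R \hat R) = \max\{\dim \hat R/\fm\hat R + p_\D(M),\ \dim M + p(\hat R/\fm\hat R)\} = \max\{0 + p_\D(M),\ \dim M + (-\infty)\} = p_\D(M).$$
Since $\hat M = M \otimes_R \hat R$, this gives $p_{\D\otimes\hat R}(\hat M) = p_\D(M)$ as claimed. Here I should be slightly careful about one point: Theorem \ref{flat} is stated for an arbitrary filtration $\F$ satisfying the dimension condition, and $\D \otimes_R \hat R$ is the filtration obtained by tensoring the dimension filtration $\D$ of $M$ with $\hat R$; this filtration satisfies the dimension condition by the dimension-shift computation ($\dim D_i \otimes_R \hat R = \dim D_i + 0$) recorded just before Lemma \ref{51}, so the invariant $p_{\D\otimes\hat R}(\hat M)$ is well-defined and the theorem applies. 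Note that $\D \otimes_R \hat R$ need not be the dimension filtration of $\hat M$ in general (as the Ferrand-Raynaud example discussed after Corollary \ref{co63} shows), but that does not affect the equality of invariants being claimed, since the left-hand side is literally $p$ of the filtration $\D \otimes \hat R$.

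For the final assertion, suppose $M$ is sequentially Cohen-Macaulay. By the corollary following Theorem \ref{A}, this is equivalent to the existence of a filtration satisfying the dimension condition with $p$-invariant equal to $-\infty$; in fact for a sequentially Cohen-Macaulay module the dimension filtration itself works, so $p_\D(M) = -\infty$ (this uses \cite[Theorem 4.2]{CC1}, already invoked in the excerpt). Then the equality just proved gives $p_{\D\otimes\hat R}(\hat M) = -\infty$, so $\hat M$ admits a filtration satisfying the dimension condition whose $p$-invariant is $-\infty$, and hence $\hat M$ is sequentially Cohen-Macaulay by the same corollary of Theorem \ref{A}. I do not expect any serious obstacle here: the only thing requiring a moment's attention is making sure the bookkeeping around which filtration is being used (the dimension filtration of $M$ tensored up, versus the dimension filtration of $\hat M$) is phrased correctly, and that the characterization of sequential Cohen-Macaulayness via $p_\F = -\infty$ is being applied in the right direction for each half of the statement.
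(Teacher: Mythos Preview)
Your proposal is correct and matches the paper's intended argument: the corollary is stated without proof as an immediate consequence of Theorem \ref{flat} (equivalently, Corollary \ref{co63}(i) and (iii)) applied to the flat local map $R\to\hat R$, using that $\hat R/\fm\hat R\cong R/\fm$ is a field of dimension zero. Your additional remarks about $\D\otimes\hat R$ not necessarily being the dimension filtration of $\hat M$, and the verification that $\hat R$ is catenary, are correct and make explicit points the paper leaves implicit.
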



\end{document}